\newtheorem{prop}{Proposition}[section]
\newtheorem{defn}[prop]{Definition}
\newtheorem{thm}[prop]{Theorem}
\newtheorem{cor}[prop]{Corollary}
\newtheorem{lem}[prop]{Lemma}
\newtheorem*{conj_a}{Conjecture}
\newtheorem*{defn_1}{Definition 2.1}
\newtheorem*{prop_1}{Proposition}
\newtheorem*{cor_1}{Corollary}
\newtheorem*{thm_main}{Main Theorem}
\newcommand{\quot}[2]{{\raisebox{.2em}{$#1$}\left/\raisebox{-.2em}{$#2$}\right.}}
\newcommand{\lk}{\text{lk}}
\def\mathbi#1{\textmd{\em #1}}
\newcommand{\Bl}{\mathcal{B}\mathbi{l}}
\newcommand{\integers}{\mathbb{Z}}
\begin{document}
\title[Slice knots which bound punctured Klein bottles]{Slice knots which bound punctured Klein bottles}

\author{Arunima Ray}
\address{Department of Mathematics, Rice University}
\email{arunima.ray@rice.edu}
\urladdr{www.math.rice.edu/$\sim$ar25}

\date{\today}

\subjclass[2000]{57M25}

\begin{abstract}
We investigate the properties of knots in $\mathbb{S}^3$ which bound punctured Klein bottles, such that a pushoff of the knot has zero linking number with the knot, i.e.\ has \textit{zero framing}. This is motivated by the many results in the literature regarding slice knots of genus one, for example, the existence of homologically essential zero self-linking simple closed curves on genus one Seifert surfaces for algebraically slice knots. Given a knot $K$ bounding a punctured Klein bottle $F$ with zero framing, we show that $J$, the core of the orientation-preserving band in any disk-band form of $F$, has zero self-linking. We prove that such a $K$ is slice in a $\mathbb{Z}\left[\frac{1}{2}\right]$-homology $\mathbb{B}^4$ if and only if $J$ is as well, a stronger result than what is currently known for genus one slice knots. As an application, we prove that given knots $K$ and $J$ and any odd integer $p$, the $(2,p)$ cables of $K$ and $J$ are $\mathbb{Z}\left[\frac{1}{2}\right]$-concordant if and only if $K$ and $J$ are $\mathbb{Z}\left[\frac{1}{2}\right]$-concordant. In particular, if the $(2,1)$-cable of a knot $K$ is slice, $K$ is slice in a $\mathbb{Z}\left[\frac{1}{2}\right]$-homology ball. 
\end{abstract}

\maketitle

\section{Introduction}

A \textit{knot} is the image of a smooth embedding $\mathbb{S}^1 \hookrightarrow \mathbb{S}^3 = \partial\mathbb{B}^4$. A knot is called \textit{slice} if it bounds a smoothly embedded disk in $\mathbb{B}^4$. The set of knots, modulo slice knots, under the connected sum operation forms an abelian group called the \textit{knot concordance group}, denoted by $\mathcal{C}$. In \cite{Lev69a,Lev69b}, Levine described a surjection from $\mathcal{C}$ to $\integers^\infty \oplus\, \left(\integers/2\integers\right)^\infty \oplus\, \left(\integers/4\integers\right)^\infty$. Knots in the kernel of this map are said to be \textit{algebraically slice}. The quotient of $\mathcal{C}$ by algebraically slice knots is called the \textit{algebraic knot concordance group}, denoted $\mathcal{AC}$.

It is a well-known fact that given any knot $K$, we can find an embedded oriented surface in $\mathbb{S}^3$ whose single boundary component is $K$. Such a surface is called a \textit{Seifert surface}. Seifert surfaces give rise to a multitude of knot invariants, such as the \textit{genus} of $K$, the minimum genus of a Seifert surface for $K$. There are many results in the literature about the properties of genus one knots, i.e.\ knots which bound punctured tori. These represent the simplest non-trivial class of Seifert surfaces. In \cite{Gi83} Gilmer showed that if a knot $K$ is algebraically slice and bounds a punctured torus $F$ then, up to isotopy and orientation, there are exactly two homologically essential simple closed curves $J_1$ and $J_2$ on $F$ with zero self-linking with respect to the Seifert form on $F$. This is an important result, since if one of these curves $J_i$ is a slice knot, $K$ must be slice as well since we can construct a slice disk or $K$ by surgering $F$ along $J_1$ or $J_2$. Consequently, curves such as $J_1$ and $J_2$, namely, homologically essential simple closed curves on a genus one surface with self-linking zero, are called \textit{surgery curves} for $F$. In 1982 \cite[Strong Conjecture, p. 226]{Kauff87}, Kauffman conjectured the converse as follows:

\begin{conj_a}[Kauffman's Conjecture, Problem N1.52 of \cite{Kirby84}]$K$ is a slice knot with a genus one Seifert surface $F$ if and only if $F$ has a surgery curve which is slice.\end{conj_a}
 
Much work has been done towards proving this result \cite{CHL10, COT03, Coo82, Gi93}. Casson-Gordon theory can be used to show that at least one of the curves $J_i$ must satisfy some strong requirements on its algebraic concordance class, but it was recently shown that these fail to imply a vanishing signature function \cite{GiLiv11}. Soon after this present paper was completed, Cochran and Davis \cite{CD13} showed that Kauffman's conjecture is false. In particular, they constructed (smoothly) slice knots that admit Seifert surfaces such that neither surgery curve has zero Arf invariant. Moreover, there exist examples where the Seifert surfaces considered are the unique minimal genus Seifert surface up to isotopy. We note that the Arf invariant is a remarkably weak invariant and therefore, Cochran and Davis have shown that very little can be said about the concordance properties of surgery curves on genus one Seifert surfaces for slice knots. 

The motivation for this paper is to understand what is true in the analogous context of knots which bound punctured Klein bottles. (Notice that this is slight abuse of terminology: we are referring to Klein bottles with a disk removed, whose single boundary component consists of the knot. These are of course different from \textit{punctured} Klein bottles, but we retain the terminology for the sake of brevity.) Recall that for a connected, compact non-orientable surface the term \textit{genus} is used to refer to the number of summands in its unique decomposition as a connected sum of real projective planes (with disks removed if necessary). In \cite{Cl78}, Clark defined the \textit{crosscap number} of a knot, denoted $c(K)$, to be the minimum genus of non-orientable surfaces bounded by $K$. This invariant is occasionally referred to as the \textit{crosscap genus} or the \textit{non-orientable genus} of $K$. $c(K)$ is a useful invariant since there are knots of arbitrarily large genus with $c(K)=1$. A lot of work has been done on computing the crosscap numbers of certain families of knots, such as in \cite{Cl78,HiraTera06,IchiMizu10,MuraYasu95,Tera01,Tera04}. 

Knots with $c(K)=1$ are completely classified by the following result of Clark: 

\begin{prop_1}[Proposition 2.2. from \cite{Cl78}]\label{c(k)=1 is cable} $c(K)=1$ if and only if $K$ is a $(2,n)$--cable knot. \end{prop_1}

As a result, punctured Klein bottles represent the simplest classes of non-orientable surfaces bounded by knots which are not easily understood. Knots bounding punctured Klein bottles were used in \cite{HeLivRu10} to construct examples of topologically slice knots with nontrivial Alexander polynomials. 

Suppose a knot $K$ bounds a non-orientable surface $F$. If we define the \textit{longitude} $\lambda$ of $K$ to be a pushoff in the direction of $F$, we see that $\lambda$ bounds a non-orientable surface in the knot complement and therefore, has even linking number with the knot. In this paper we will often assume, to parallel the orientable case, that $\lk(K,\lambda)=0$

\begin{defn_1}Let $K\subseteq \mathbb{S}^3$ be a knot and $F\subseteq \mathbb{S}^3$ be a non-orientable surface with $K = \partial F$. Let $N(K)$ be a regular neighborhood of $K$. We refer to $\lambda = F \cap  \partial N(K)$ as the \textit{longitude} of $K$. We define the \textit{framing} of $F$ to be $\lk(K,\lambda)$, denoted $\mathcal{F}(F)$.\end{defn_1}

The main result of our paper is the following:

\begin{thm_main} If a slice knot $K$ bounds a punctured Klein bottle $F$ with $\mathcal{F}(F)=0$, we can find a 2-sided homologically essential closed curve $J$ embedded in $F$ with self-linking zero which is slice in a $\mathbb{Z}\left[\frac{1}{2}\right]$-homology ball and hence, rationally slice (i.e.\ slice in a $\mathbb{Q}$-homology $\mathbb{B}^4$).\end{thm_main}

We will see that surgering $F$ along a slice curve $J$ as mentioned in the above theorem also yields a slice disk for $K$. Therefore, the notion of surgery curve can be extended to non-orientable surfaces of genus 2. 

Rational concordance has been studied extensively and in great generality \cite{Cha07}. Being rationally slice is a strong condition since many classical concordance invariants secretly obstruct knots being $\mathbb{Q}$-concordant. For example, it is known that both the Levine–Tristram signature function and the $\tau$-invariant of Ozsváth and Szabó and Rasmussen \cite{OS03, Ras03} are zero for rationally slice knots. Therefore, in marked contrast to the genus one case, our result shows that there are very strong restrictions on the concordance class of surgery curves on punctured Klein bottles.

We will start this paper by proving some general properties of non-orientable surfaces bounded by knots with zero framing, followed by our main theorem and other results relating to concordance. The tools developed will enable us to prove a surprising corollary about cable knots. We will use the notation $K_{(m,n)}$ to denote the $(m,n)$--cable of a knot $K$. Details about the cabling operation can be found in any introductory knot theory textbook, such as \cite[Chapter 4D]{Ro90}. It can be easily shown that given concordant knots $K$ and $J$, $K_{(m,n)}$ and $J_{(m,n)}$ are concordant for any choice of $m$ and $n$. Using our results in Sections 2 and 3 we will prove the following partial converse.

\begin{cor_1}Given knots $K$ and $J$ and any odd integer $p$, if $K_{(2,p)}$ is concordant to $J_{(2,p)}$ then $K$ is concordant to $J$ in a $\mathbb{Z}\left[\frac{1}{2}\right]$-homology $\mathbb{S}^3\times [0,1]$. In particular, if $K_{(2,p)}$ is concordant to the $(2,p)$--torus knot, then $K$ is slice in a $\mathbb{Z}\left[\frac{1}{2}\right]$-homology $\mathbb{B}^4$.\end{cor_1}

This result is related to the recent work studying whether satellite operations are injective on the smooth knot concordance group \cite{CHL11,HeK10}, i.e.\ if two satellite knots on the same pattern knot are concordant, are the companion knots concordant? The (conjectured) smooth injectivity of the Whitehead doubling operator, for instance, has been studied for many years \cite[Problem 1.38]{kirbylist}. Corollary \ref{cableconv} has been generalized by Cochran, Davis and the author to a much larger family of satellite operators in \cite{CDR12}. 

\subsection{Notation and definitions}
We will work in the smooth category. Two knots $K_i\hookrightarrow\mathbb{S}^3=\partial \mathbb{B}^4$, $i=0,1$, are said to be \textit{concordant} if there exists a smooth proper embedding of an annulus into $\mathbb{S}^3\times [0,1]$ that restricts to $K_i$ on each $\mathbb{S}^3\times \{i\}$. A knot is called \textit{slice} if it is concordant to the unknot, or equivalently, if it is the boundary of a smooth embedding of a 2-disk in $\mathbb{B}^4$. 

There is a corresponding notion of knots being slice and concordant in spaces which look like $\mathbb{B}^4$ and $\mathbb{S}^3\times[0,1]$ with respect to homology with specified coefficients. Suppose $R\subseteq \mathbb{Q}$ is a non-zero subring. A space $X$ is called an \textit{R-homology Y} if $H_*(X;R) \cong H_*(Y;R)$. Knots $K_0$ and $K_1$ in $\mathbb{S}^3$ are said to be $R$-\textit{concordant} if there exists a compact, oriented, smooth 4-manifold $W$ such that $W$ is an $R$-homology $\mathbb{S}^3 \times [0,1]$, $\partial W = \mathbb{S}^3\times \{0\}\, \sqcup\, -\mathbb{S}^3\times \{1\}$, and there exists a smooth properly embedded annulus in $W$ which restricts on its boundary to the given knots. We say that $K$ is \textit{$R$-slice} if it is $R$-concordant to the unknot, or equivalently if it bounds a smoothly embedded 2-disk in an $R$-homology 4-ball whose boundary is $\mathbb{S}^3$. The set of knots modulo $R$-slice knots forms an abelian group.

Two 3-manifolds $M_1$ and $M_2$ are said to be \textit{homology cobordant} if there exists a 4-manifold $W$ which is a smooth cobordism between $M_1$ and $M_2$, such that $H_*(W,M_1)=0=H_*(W,M_2)$. For $R$ as above, $M_1$ and $M_2$ are called \textit{$R$-homology cobordant} if there exists a $W$ as above with the weaker requirement that $H_*(W,M_1;R)=0=H_*(W,M_2;R)$. For any knot $K$ we will use the notation $M_K$ to denote the zero-framed surgery on $K$.

A curve $\gamma$ on a surface $F\subseteq \mathbb{S}^3$ is called \textit{2-sided} if it has a regular neighborhood in $F$ homeomorphic to an annulus, i.e.\ it has a trivial normal bundle. It is well known that $\gamma$ is orientation-preserving iff it is 2-sided. A 2-sided $\gamma$ has a regular neighborhood which is an annulus. Let $\gamma^+$ and $\gamma^-$ denote the two boundary components of this annulus. The \textit{self-linking} of $\gamma$ is defined to be $\lk(\gamma,\gamma^+)=\lk(\gamma^-,\gamma)=\lk(\gamma^-,\gamma^+)$. 

We will also frequently require the `disk-band' form of an embedded surface with boundary. We recall that given any embedding in $\mathbb{S}^3$ of a surface $F$ with a single boundary component, there is an ambient isotopy of $\mathbb{S}^3$ taking $F$ to the standard form of a disk with bands attached, wherein the bands may be twisted, linked or knotted, by collapsing towards the 1-skeleton. This process is described in \cite[pp. 81]{Kauff87}. We will additionally require that the disk-band form of a punctured Klein bottle contain an orientation preserving band, i.e.\ exactly one of the two bands in the disk band form has an odd number of half-twists. 

\section{Properties of knots bounding punctured Klein bottles with zero framing}

We recall the following definition from Section 1: 
\begin{defn}\label{framingdef}Let $K\subseteq \mathbb{S}^3$ be a knot and $F\subseteq \mathbb{S}^3$ be a non-orientable surface with $K = \partial F$. Let $N(K)$ be a regular neighborhood of $K$. We refer to $\lambda = F \cap  \partial N(K)$ as the \textit{longitude} of $K$. We define the \textit{framing} of $F$ to be $\lk(K,\lambda)$, denoted $\mathcal{F}(F)$.\end{defn}

Given an embeddng of a surface $F$, we can first perform an ambient isotopy on $\mathbb{S}^3$ to get $F$ in disk-band form. Given such an embedding, one can obtain $\mathcal{F}(F)$ by drawing a parallel to the boundary and computing the linking number. Such a calculation can be performed solely on the basis of the types and numbers of crossings of the bands and the twists in each band. 

We notice that $\lambda$ bounds a non-orientable surface in the complement of $K$, and therefore, $\mathcal{F}(F)$ is always an even number. In this paper we will often further restrict $\mathcal{F}(F)$ to be zero to mirror the orientable case. We start by investigating some implications of the zero framing condition on any non-orientable surfaces which bound knots. First of all, it would be nice to know that this is possible: 
\begin{figure}[t]
  \centering
  \includegraphics{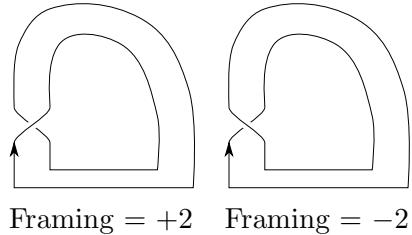}
  \put(-2.1,-0.2){\small Framing = $+2$}
  \put(-0.97,-0.2){\small Framing = $-2$}
  \caption[M\"{o}bius bands bounded by the unknot. Given a non-orientable surface $F$ bounded by some knot $K$, we boundary connect sum $F$ with the M\"{o}bius bands above to change framing without changing the knot type of the boundary.]{M\"{o}bius bands bounded by the unknot. Given a non-orientable surface $F$ bounded by some knot $K$, we boundary connect sum $F$ with the M\"{o}bius bands above to change framing without changing the knot type of the boundary.}\label{fixframing}
\end{figure}
\begin{prop}Any knot $K$ bounds some non-orientable surface (compact with a single boundary component smoothly embedded in $\mathbb{S}^3$) with zero framing.\end{prop}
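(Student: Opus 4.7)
The plan is to start with any non-orientable Seifert-type surface $F'$ bounded by $K$ (which exists, e.g., by taking a Seifert surface and boundary-connect-summing with a Möbius band so the result is non-orientable), and then correct its framing by boundary-connect-summing with model Möbius bands bounded by the unknot.

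First I would invoke the fact, recorded just before the proposition, that $\mathcal{F}(F')$ is necessarily an even integer $2n$, because the longitude $\lambda = F' \cap \partial N(K)$ bounds a non-orientable surface in the complement of $K$. Next, I would fix once and for all the two model Möbius bands $M_+$ and $M_-$ shown in Figure \ref{fixframing}, with $\partial M_\pm$ the unknot and $\mathcal{F}(M_\pm) = \pm 2$; this is a direct computation from the disk-band picture (a single band with one half-twist of the appropriate sign).

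The key step is then to observe that boundary connect sum of non-orientable surfaces in $\mathbb{S}^3$ is additive on framing, provided the connect sum is performed in a small ball disjoint from the rest of the picture. Indeed, if $F$ and $G$ are disjointly embedded non-orientable surfaces with boundaries $K$ and $U$ (the unknot), and we form $F \natural G$ along a band in a small ball meeting $\partial F$ and $\partial G$ in arcs, the new boundary is isotopic to $K \# U = K$, and a parallel longitude of $\partial(F \natural G)$ computed from the surface differs from the disjoint union of longitudes of $\partial F$ and $\partial G$ only inside the ball, contributing $0$ to the linking number. Hence $\mathcal{F}(F \natural G) = \mathcal{F}(F) + \mathcal{F}(G)$.

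Applying this with $G = M_-$ taken $n$ times if $n>0$, or $G = M_+$ taken $|n|$ times if $n<0$, produces a non-orientable surface $F = F' \natural (\pm M_\mp)^{\natural |n|}$ with $\partial F = K$ and $\mathcal{F}(F) = 2n \mp 2|n| = 0$. The only mild technicality to check is that each boundary-connect-sum can be performed in a ball disjoint from all previously attached model bands, which is immediate by shrinking and isotoping the $M_\pm$'s into disjoint regions of $\mathbb{S}^3 \setminus F'$. I expect no real obstacle here; the content of the proposition is essentially the existence and framing computation of the model Möbius bands together with the additivity of framing under boundary connect sum.
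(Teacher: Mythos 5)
Your proof is correct and takes essentially the same approach as the paper: start with any non-orientable surface bounding $K$, note that its framing is even, and correct it to zero by boundary-connect-summing with the model M\"{o}bius bands of framing $\pm 2$, using additivity of framing under boundary connect sum. The paper even gives as a second proof a shortcut identical in spirit to yours (start with a Seifert surface and sum with both $M_+$ and $M_-$), and your explicit justification of the additivity of framing is a small but welcome addition the paper leaves implicit.
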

\begin{proof}We know that any knot $K$ bounds some such non-orientable surface, $F$, obtained using the checkerboard coloring of a diagram for $K$ \cite{Cl78}. $\mathcal{F}(F)$ is an even number, which is additive under boundary connect sum of surfaces by the remarks above. We can take the boundary connect sum of $F$ with as many copies of the M\"{o}bius bands in Figure \ref{fixframing} to change the framing to $0$. This does not change the knot type of the boundary since the M\"{o}bius bands in Figure \ref{fixframing} bound unknots.

Alternately, one can start with a Seifert Surface $F$ for $K$, and then boundary connect sum $F$ with both the M\"{o}bius bands in Figure \ref{fixframing}. The resulting framing will be $0+2-2=0$\footnote{We are grateful to the anonymous referee who suggested this alternate proof.}.\end{proof}

\begin{lem}\label{genus-framing}Suppose a knot $K$ bounds a non-orientable surface $F$ (with a single boundary component) with framing $\mathcal{F}(F)$. $\mathcal{F}(F)\equiv 2 \mod{4}$ if and only if the genus of $F$ is odd; $\mathcal{F}(F)\equiv 0 \mod{4}$ if and only if the genus of $F$ is even. In particular, if $\mathcal{F}(F)=0$, $F$ has even genus. \end{lem}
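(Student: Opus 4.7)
The plan is to invoke the Gordon--Litherland formula for spanning surfaces and then extract the answer modulo $4$ by a short parity argument.

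Recall that for any connected compact surface $F\subseteq\mathbb{S}^3$ with connected boundary $K$, Gordon and Litherland define a symmetric $\mathbb{Z}$-valued bilinear form $G_F$ on $H_1(F;\mathbb{Z})$ via $G_F(x,y)=\mathrm{lk}(x,\tau(y))$, where $\tau(y)$ is the sum of the two push-offs of $y$ to the two local sides of $F$. Their main identity reads $\sigma(K)=\sigma(G_F)-\tfrac{1}{2}e(F)$, where $e(F)$ is the normal Euler number of $F\subseteq\mathbb{S}^3$ and in our sign convention $e(F)=\pm\mathcal{F}(F)$. Rearranging gives $\mathcal{F}(F)=\pm 2\bigl(\sigma(G_F)-\sigma(K)\bigr)$.

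Now reduce modulo $4$. The knot signature $\sigma(K)$ is the signature of the symmetrised Seifert form $V+V^T$ on $H_1(\Sigma;\mathbb{Z})$ for any orientable Seifert surface $\Sigma$; since $\det(V+V^T)=\pm\det(K)\neq 0$, this form is non-degenerate of even rank $2g(\Sigma)$, whence $\sigma(K)$ is even and $2\sigma(K)\equiv 0\pmod 4$. On the Gordon--Litherland side, $G_F$ is also non-degenerate (its determinant is again $\pm\det(K)\neq 0$), so $\sigma(G_F)\equiv\mathrm{rank}(G_F)\pmod 2$, and $\mathrm{rank}(G_F)=\mathrm{rank}_{\mathbb{Z}}H_1(F)$, which equals the non-orientable genus $n$ of $F$ because $F$ has a single boundary component and $\chi(F)=1-n$. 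Hence $2\sigma(G_F)\equiv 2n\pmod 4$, and combining the two congruences yields $\mathcal{F}(F)\equiv\pm 2n\equiv 2n\pmod 4$ (the two signs agree modulo $4$ since $4n\equiv 0$).

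From $\mathcal{F}(F)\equiv 2n\pmod 4$, both equivalences in the lemma drop out: $\mathcal{F}(F)\equiv 2\pmod 4$ iff $n$ is odd, $\mathcal{F}(F)\equiv 0\pmod 4$ iff $n$ is even, and in particular $\mathcal{F}(F)=0$ forces $n$ to be even. The main obstacle in writing this out cleanly is pinning down the sign convention that identifies $e(F)$ with $\pm\mathcal{F}(F)$, so that the Gordon--Litherland identity matches the paper's definition of the framing; as a more self-contained alternative (closer to the paper's disk-band viewpoint) one can place $F$ in disk-band form with $n$ bands, compute $\mathcal{F}(F)=2\sum_i t_i+2\sum_{i<j}\ell_{ij}$ from the half-twist counts $t_i$ and the signed band-band crossings $\ell_{ij}$, and then prove the parity identity $\sum_i t_i+\sum_{i<j}\ell_{ij}\equiv n\pmod 2$ from the fact that $K$ has a single component and the constraints this puts on the attaching permutation of the bands.
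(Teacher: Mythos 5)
Your proof is correct and takes essentially the same route as the paper: both invoke the Gordon--Litherland identity $\sigma(K)=\sigma(G_F)-\tfrac{1}{2}\mathcal{F}(F)$, use non-degeneracy of $G_F$ to identify the parity of $\sigma(G_F)$ with that of $\mathrm{rank}\,H_1(F)=n$, and combine with the evenness of $\sigma(K)$ to conclude $\mathcal{F}(F)\equiv 2n \pmod 4$. The extra care you take about the sign convention for $e(F)$ versus $\mathcal{F}(F)$ is harmless since the sign is killed modulo $4$, and the sketched disk-band alternative is a genuinely more elementary variant but is not what the paper does.
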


\begin{proof}For any knot $K$, if $F$ is a surface (possibly non-orientable) with $\partial F = K$, there exists a non-singular symmetric bilinear form \cite{GLi78}\cite[Chapter 9]{Lick97} $$\mathcal{G}_F:H_1(F)\times H_1(F) \rightarrow \mathbb{Z}$$ such that $$\sigma(K) = \text{sign}(\mathcal{G}_F)-\frac{1}{2}\mathcal{F}(K)$$ where $\text{sign}(\mathcal{G}_F)$ is the signature of any matrix representing the bilinear form $\mathcal{G}_F$. Since $\mathcal{G}_F$ is a non-singular bilinear form on $H_1(F)$, $\text{sign}(\mathcal{G}_F)$ is even exactly when $\text{dim}(H_1(F))$ is even, i.e.\ $F$ has even genus. Since $\sigma(K)$ is always even, $\frac{1}{2}\mathcal{F}(F)$ is even exactly when $F$ has even genus. \end{proof}

\begin{figure}[t]
  \centering
  \includegraphics{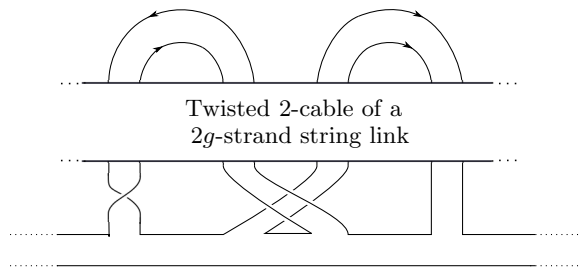}
  \put(-2.07,0.8){\scriptsize Twisted $2$-cable of a}
  \put(-2.04,0.66){\scriptsize $2g$-strand string link}
  \caption[Disk band picture of a general non-orientable surface with even genus $g$.]{Disk band picture of a general non-orientable surface with even genus $g$.}\label{nosurface}
\end{figure}
\begin{prop}\label{||curve}If a knot $K$ bounds a non-orientable surface $F$ (with a single boundary component) with zero framing, there is a 2-sided homologically essential closed curve embedded in $F$ with zero self-linking. In particular, the curve constructed is the Poincar\'e dual to $w_1(F)$, the first Stiefel-Whitney class of the tangent bundle of $F$.\end{prop}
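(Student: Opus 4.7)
The plan is to construct $c$ explicitly in a disk-band form of $F$. By Lemma~\ref{genus-framing} and the hypothesis $\mathcal{F}(F) = 0$, $F$ has even non-orientable genus $2g$, so we may put $F$ in disk-band form with $2g$ bands $b_1, \ldots, b_{2g}$. Let $\epsilon_i \in \{0,1\}$ denote the parity of the number of half-twists of $b_i$; the band is orientation-reversing iff $\epsilon_i = 1$, and the set $S = \{i : \epsilon_i = 1\}$ is non-empty since $F$ is non-orientable.

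I will construct $c$ by modifying the longitude $\lambda = F \cap \partial N(K)$, which traverses each band twice. At each band $b_i$, the two strands of $\lambda$ are either coherently oriented ($i \notin S$) or oppositely oriented ($i \in S$). Perform an oriented smoothing along the transverse arc $\tau_i$ of $b_i$: this replaces the two strands by two disjoint arcs avoiding $b_i$ when $i \notin S$, and by a single arc crossing $b_i$ transversely via $\tau_i$ when $i \in S$. By arranging the smoothings so that the result is connected, one obtains a simple closed curve $c \subset F$ that crosses each orientation-reversing band exactly once transversely and avoids every orientation-preserving band.

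I then verify the properties of $c$. For the basis $\{[a_i]\}$ of $H_1(F;\mathbb{Z})$ given by the band cores, one has $\langle w_1(F), [a_i]\rangle = \epsilon_i$ by definition of orientation-reversing. By construction, $c \cdot a_i \equiv \epsilon_i \pmod{2}$, so $[c]$ represents the Poincar\'e dual of $w_1(F)$ in $H_1(F,\partial F;\mathbb{Z}/2)$ and is in particular homologically nontrivial. Two-sidedness follows from the construction: the regular neighborhood of $c$ in $F$ consists of untwisted strips in the disk together with untwisted strips across the transverse arcs---since each band's twisting is along the band's length, orthogonal to the transverse direction---and these assemble into an annulus neighborhood.

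The main remaining step is to verify that the self-linking $\lk(c, c^+) = 0$, where $c^+$ is a pushoff of $c$ in $F$. The plan is to compute $\lk(c, c^+)$ directly in the disk-band form, tracking how the oriented smoothings that produce $c$ from $\lambda$ convert the linking number $\lk(K, \lambda) = \mathcal{F}(F)$ into the self-linking of $c$. A careful accounting of the contributions from each band's half-twists and the inter-band linkings shows that the self-linking is a specific combination of the framing data that reduces to $\mathcal{F}(F)/2 = 0$ under the hypothesis. This diagrammatic bookkeeping---keeping signs of crossings and twist contributions consistent throughout---is the main obstacle, but it is a routine if intricate computation once the construction of $c$ is fixed.
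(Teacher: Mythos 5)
The construction of the curve differs from the paper's in a notable way: the paper takes $\gamma$ to be the band sum of the \emph{cores of the orientation-preserving bands}, whereas you build $c$ to cross each \emph{orientation-reversing} band once transversely and to avoid the orientation-preserving bands. Both proposals yield a class Poincar\'e dual to $w_1(F)$, and on a punctured Klein bottle they give the same isotopy class (the unique $2$-sided essential curve), so the difference is benign; the duality and $2$-sidedness checks in your write-up are fine. (Incidentally, your parity assignment at the bands is inverted --- along an orientation-\emph{preserving} band the two strands of $\lambda$ run anti-parallel, along an orientation-\emph{reversing} band they run parallel --- but since you ultimately assert that $c$ crosses the orientation-reversing bands and avoids the orientation-preserving ones, the end state of the construction is the right curve.)

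The genuine gap is the self-linking computation, which is the entire content of the proposition. You write that ``a careful accounting \ldots shows that the self-linking \ldots reduces to $\mathcal{F}(F)/2$'' and call this a ``routine if intricate'' bookkeeping step, but you neither carry it out nor sketch the decisive observation that makes it work. The paper's proof turns precisely on such an observation: classifying crossings between bands by the orientation type of the two participating bands, showing that \emph{only} crossings (and full twists) of orientation-preserving bands contribute to $\mathcal{F}(F)$ (each contributing $\pm 4$), and noting that exactly these crossings govern $\lk(\gamma,\gamma^+)$, yielding $\mathcal{F}(F) = 4\lk(\gamma,\gamma^+)$. Without such a structural lemma, your proposed strategy of ``tracking how the smoothings convert $\lk(K,\lambda)$ into $\lk(c,c^+)$'' is not an argument --- oriented smoothings do not transform linking numbers in any controlled way in general, and it is not clear the discarded contributions cancel. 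Moreover, your asserted formula $\lk(c,c^+) = \mathcal{F}(F)/2$ disagrees with the paper's $\mathcal{F}(F)/4$ by a factor of $2$, which suggests the computation was not actually done. The proposal should be regarded as incomplete until this step is established with explicit crossing-by-crossing bookkeeping or an equivalent structural argument.
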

\begin{figure}[b]
  \centering
  \includegraphics{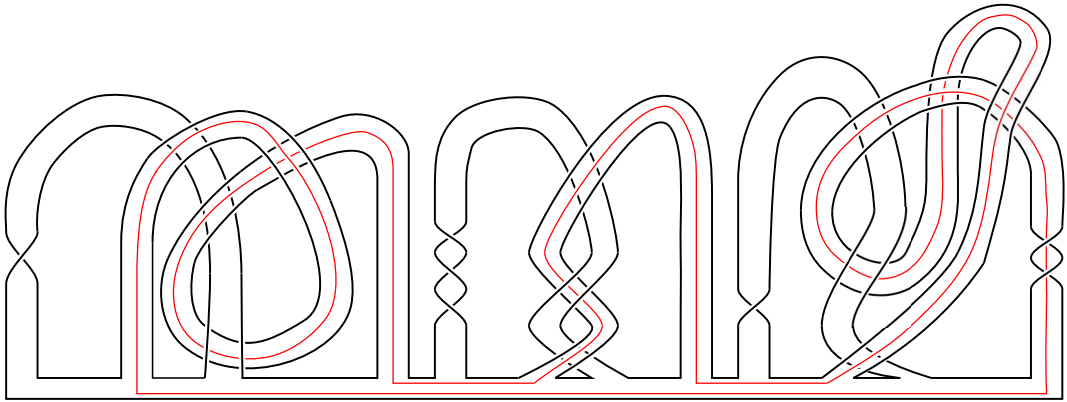}
  \caption[Curve of self-linking zero on a non-orientable surface with zero framing.]{Curve of self-linking zero on a non-orientable surface bounding a knot with zero framing.}\label{examplecurve}
\end{figure}
\begin{proof}By Lemma \ref{genus-framing}, since $\mathcal{F}(F)=0$, the genus of $F$ is even. We perform an ambient isotopy of $\mathbb{S}^3$ to obtain $F$ in disk-band form and then slide bands so that we get $F$ as a boundary connect sum of punctured Klein bottles where each punctured Klein bottle is of the form shown in Figure \ref{nosurface}, i.e.\ each has one orientation-preserving band. Of course, the bands may interact with each other in ways other than crossings, as shown in Figure \ref{examplecurve}, and bands of different summands may also interact. As in the remarks at the beginning of this section, $\mathcal{F}(F)$ can be computed by considering the different types of crossings between bands and the twists within each band, in some projection for $K$. Crossings between bands are of three types: crossings where both participating bands are orientation-preserving, crossings where one band is orientation-preserving and the other is orientation-reversing, and crossings where both participating bands are orientation-reversing. It is easy to calculate that the only non-zero contributions to $\mathcal{F}(F)$ are from crossings of the first type, i.e.\ where both participating bands are orientation-preserving, each of which contributes $\pm 4$ depending on the relative orientations of the crossing bands. Full twists of the orientation-preserving band can be deformed into crossings of this type and also contribute $\pm4$ depending on the `handednesss' of the twist. Since the two edges of the orientation-reversing bands have opposite orientations, twists in these bands do not contribute to $\mathcal{F}(F)$.

Consider $\gamma$, the curve which is the sum of the cores of the orientation preserving bands, as in shown in the example in Figure \ref{examplecurve}. $\lk(\gamma, \gamma^+)$ can be calculated by considering only the crossings and twists of the orientation-preserving bands, which are exactly the crossings that contribute to $\mathcal{F}(F)$. In fact, $\mathcal{F}(F) = 4\lk(\gamma,\gamma^+)$. Therefore, $\mathcal{F}(F)=0$ if and only if $\lk(\gamma,\gamma^+)=0$. By construction we see that this curve intersects each orientation-reversing curve on F transversely an odd number of times and each orientation-preserving curve an even number of times, which implies that it is the Poincar\'e dual of $w_1(F)$ and therefore, homologically essential. \end{proof}
%

\begin{prop}A 2-sided non-separating homologically essential simple closed curve on $F$, a punctured Klein bottle, is unique upto orientation and isotopy. \end{prop}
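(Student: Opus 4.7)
My plan is to classify such curves via the topological type of $F$ cut open along $\gamma$, and then upgrade a homeomorphism between two different choices to an ambient isotopy. Let $\gamma$ be any 2-sided non-separating homologically essential simple closed curve in $F$. Since $\gamma$ is 2-sided, it has an annular regular neighborhood $N(\gamma)$, and I set $F_\gamma := F \setminus \mathrm{int}(N(\gamma))$. Non-separation of $\gamma$ implies $F_\gamma$ is connected; Euler characteristic is additive under this cut, giving $\chi(F_\gamma) = \chi(F) = -1$; and $F_\gamma$ has three boundary circles, namely $\partial F$ together with the two copies of $\gamma$.

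I would next show that $F_\gamma$ must be a pair of pants. If $F_\gamma$ were non-orientable of non-orientable genus $k$, then $\chi(F_\gamma) = 2 - k - 3 = -1$ would force $k = 0$, which is impossible; hence $F_\gamma$ is orientable, and being orientable with $\chi = -1$ and three boundary components, it is a thrice-punctured sphere. The surface $F$ is then recovered from $F_\gamma$ by gluing two of its boundary circles back together (those coming from $\gamma$), with the third serving as $\partial F$; because $F$ is non-orientable, this gluing must be orientation-reversing. This combinatorial description is independent of the choice of $\gamma$. Given two such curves $\gamma_1, \gamma_2$, a homeomorphism of pairs of pants respecting the boundary labeling (fixing $\partial F$ setwise and sending the pair of identified boundaries to the pair) descends, by the change-of-coordinates principle, to a self-homeomorphism $\phi: F \to F$ with $\phi(\gamma_1) = \gamma_2$.

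The hardest step is upgrading $\phi$ to an ambient isotopy of $F$, possibly after reversing orientation of $\gamma_2$. In general, simple closed curves in the same mapping class group orbit need not be ambient isotopic, since $\mathrm{Mod}(F)$ can act non-trivially on isotopy classes of curves. The planned approach is to exploit the smallness of $\mathrm{Mod}(F)$ for the punctured Klein bottle: one lifts $\gamma$ to the orientation double cover $\widetilde F \to F$, which is a twice-punctured torus where the isotopy classification of simple closed curves is classical, and then descends a $\tau$-equivariant isotopy through the deck involution $\tau$ back down to $F$. Verifying that such an equivariant isotopy actually exists, and that none of the standard generators of $\mathrm{Mod}(F)$ (boundary Dehn twists, crosscap slides) produces a genuinely new isotopy class of such a curve, is where the main work lies. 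The technical heart of the argument is therefore equivariant isotopy on the twice-punctured torus, with everything else reducing to the Euler-characteristic bookkeeping above.
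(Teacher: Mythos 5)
Your reduction to the cut surface is correct and cleanly done: cutting a punctured Klein bottle along any 2-sided, non-separating essential simple closed curve $\gamma$ does yield a pair of pants, and the Euler-characteristic bookkeeping you give (ruling out the non-orientable case and forcing genus zero) is right. The change-of-coordinates principle then produces a self-homeomorphism $\phi$ of $F$ carrying $\gamma_1$ to $\gamma_2$. All of this is sound, but it establishes only that the two curves lie in the same orbit of the homeomorphism group, which is strictly weaker than what the proposition asserts.

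The genuine gap is exactly where you flag it: you never show that $\phi$ can be chosen isotopic to the identity, i.e.\ that the mapping class group of the punctured Klein bottle acts trivially on the set of isotopy classes of such curves (up to reversal of orientation). Your sketched plan --- pass to the orientation double cover (a twice-punctured torus), use the classical classification of curves there, and descend an equivariant isotopy --- is plausible in outline but contains the entire content of the theorem. Equivariant isotopy is notoriously delicate: an isotopy upstairs between two $\tau$-invariant curves need not be $\tau$-equivariant, and making it so requires real work (and one must also check which lifted curves are $\tau$-invariant in the first place). Since you explicitly leave this as ``where the main work lies,'' the proposal is not a proof. The paper takes a different and shorter route: it quotes the known classification of (unoriented) isotopy classes of essential simple closed curves on the \emph{closed} Klein bottle --- there are four, and the 2-sided non-separating ones form a single class --- and then transfers this to the punctured surface by Gilmer's argument: an isotopy on the capped-off surface can be pushed off the filling disk, at the cost of band-summing with $\partial F$ each time the curve crosses it, and a picture shows this band-sum only reverses the curve's orientation. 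That transfer step is precisely the mechanism your write-up is missing; if you want to complete your version, you would need to either carry out the equivariant-isotopy analysis, or replace it with an argument of this ``push off the disk and track the correction'' type.
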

\begin{proof}There are exactly four isotopy classes of unoriented homologically essential simple closed curves on a Klein bottle \cite{Meeks79}\cite[Lemma 2.1]{Price77}. Moreover, any two 2-sided non-separating simple closed curves are isotopic (as unoriented curves) on the Klein bottle. Henceforth, the proof is much like Gilmer's proof of the corresponding fact about punctured tori in \cite{Gi93}. If we consider the isotopy on the Klein bottle, whenever the curve passes over the boundary component, we are effectively band-summing with the longitude of the Klein bottle. It is easily checked using a picture that band-summing a curve $\gamma$ with the longitude yields $\gamma$ with the opposite orientation. \end{proof}

We note that the curve $\gamma$ constructed in Proposition \ref{||curve} satisfies the conditions in the statement of the proposition above.

One reason for seeking curves of self-linking zero on a low genus Seifert surface is that one might perform surgery along it to reduce genus. The following result shows that the same is true for non-orientable surfaces.

\begin{prop}\label{genusdec}Given a connected non-orientable surface $F$ of genus $g$ and a single boundary component, surgering along a non-separating 2-sided curve $\gamma$ of zero self-linking, i.e.\ removing the annulus cobounded by two parallel copies of $\gamma$ and gluing in two disks, results in a disk if $g=2$. If the resulting surface is orientable, the genus is $\frac{g-2}{2}$; if the resulting surface is non-orientable, the genus is $g-2$. \end{prop}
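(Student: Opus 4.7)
The plan is to identify the surgered surface $F'$ by computing its Euler characteristic, confirming that it is a connected compact surface with a single boundary component, and then applying the classification of compact surfaces with boundary.

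First, since $F$ is a non-orientable surface of genus $g$ (a connected sum of $g$ copies of $\mathbb{RP}^2$) with one open disk removed, one computes $\chi(F) = (2 - g) - 1 = 1 - g$. The surgery removes a tubular annular neighborhood $A$ of $\gamma$, for which $\chi(A) = 0$, and glues two disks along the resulting boundary circles, contributing $+2$. Hence
$$\chi(F') \;=\; \chi(F) - \chi(A) + 2 \;=\; 3 - g.$$

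Next, I check that $F'$ is a connected compact surface with one boundary component. Because $\gamma$ is non-separating and 2-sided, $F \setminus A$ is connected; capping off its two new boundary circles with disks preserves connectedness. The original boundary $\partial F$ lies in the interior of $F \setminus A$ (it is disjoint from $A$), so $F'$ inherits exactly one boundary component from $F$.

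Finally, I apply the classification of compact surfaces. If $g = 2$, then $\chi(F') = 1$, and the only connected compact surface with one boundary component and Euler characteristic $1$ is a disk. For $g > 2$, two cases arise: if $F'$ is orientable of genus $g'$ with one boundary component, then $\chi(F') = 1 - 2g' = 3 - g$, forcing $g' = (g-2)/2$; if $F'$ is non-orientable of genus $g'$ with one boundary component, then $\chi(F') = 1 - g' = 3 - g$, forcing $g' = g - 2$.

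There is no substantial obstacle in the argument: it reduces to a routine Euler characteristic calculation together with the fact that a non-separating 2-sided interior curve has connected complement. The zero self-linking hypothesis plays no role in this purely topological statement about the abstract surface $F'$; it enters the picture only later, when one wishes to realize the surgered surface ambiently in $\mathbb{S}^3$ or push it into $\mathbb{B}^4$.
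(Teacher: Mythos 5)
Your proof is correct and takes essentially the same route as the paper's: compute $\chi(F') = 3-g$ by tracking the removal of the annulus and the gluing of two disks, note connectedness from non-separation, and then read off the genus from the classification of compact surfaces. The only additions are small and welcome ones — the explicit check of the boundary-component count and the observation that the zero self-linking hypothesis is irrelevant to the abstract topology of $F'$.
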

\begin{proof} We know that $\chi(F)=1-g$. Note that removing an annulus from $F$ does not change the Euler characteristic, since $\chi(\text{annulus})=\chi(\mathbb{S}^1)=0$. Let $F'$ the final surface with genus $g'$. We have 
\begin{align*}
\chi(F') &=(1-g)+\chi(\text{2 disks}) -\chi(\text{2 circles})\\
&=(1-g)+2-0\\
&=3-g
\end{align*}
Since $\gamma$ is non-separating, $F'$ is connected. If $F'$ is non-orientable with genus $g'$, we have that $1-g'=3-g \Rightarrow g'=g-2$. If $F'$ is orientable with genus $g'$, we have that $1-2g'=3-g \Rightarrow g'=\frac{g-2}{2}$. \end{proof}

Note that if surgery is performed on the curve $\gamma$ dual to $w_1(F)$ constructed in Proposition \ref{||curve} the resulting surface is necessarily orientable---since every orientation-reversing curve on the original surface intersected $\gamma$ once, surgering along $\gamma$ effectively removes all orientation-reversing curves from $F$. 

The above proposition implies that if a knot $K$ has a surgery curve $\gamma$ which is a slice knot, $K$ is slice as well. It is easily seen that if $\gamma$ is additionally ribbon, $K$ is ribbon as well.

The following are basic results for knots with crosscap number 2 which do not appear in the literature and will be used in the proof of Corollary \ref{cableconv}. 

\begin{prop}\label{cables}Given any knots $K$ and $J$, the composite knot $K_{(2,p)}\#J_{(2,-p)}$ bounds a punctured Klein bottle $F$ with zero framing. There is a disk-band form for $F$ where the knot type of the orientation-preserving band is $K\#J$. \end{prop}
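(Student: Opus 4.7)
I would construct $F$ explicitly as a boundary connect sum of two Möbius bands, then modify its disk-band form by a single band slide to expose the required orientation-preserving band of core $K\#J$.

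\emph{Construction of $F$.} For any knot $L$ and odd integer $q$, the cable $L_{(2,q)}$ bounds a natural once-punctured Möbius band $M_L^{q}$ inside the tubular neighborhood $N(L)$: in disk-band form $M_L^{q}$ consists of a meridional disk of $N(L)$ together with a single band running along the $0$-framed longitude of $L$ and carrying $q$ half-twists, whose core is isotopic in $\mathbb{S}^3$ to $L$. A direct crossing count in the standard (unknot) position, together with the cabling formula for linking numbers---the $0$-framing ensures the self-linking of the companion contributes nothing---shows that $\mathcal{F}(M_L^{q}) = 2q$, independent of $L$. Taking $F = M_K^{p}\,\natural\, M_J^{-p}$ via a boundary connect sum along a trivially framed band in $\mathbb{S}^3$ disjoint from both surfaces yields a punctured Klein bottle with $\partial F = K_{(2,p)}\# J_{(2,-p)}$, and by additivity of the framing under boundary connect sum, $\mathcal{F}(F) = 2p + (-2p) = 0$.

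\emph{Band slide.} The resulting disk-band form presents $F$ as a disk with two orientation-reversing bands $b_K, b_J$, each with an odd number of half-twists and with cores isotopic in $\mathbb{S}^3$ to $K$ and $J$ respectively. Performing a single band slide of $b_K$ over $b_J$ along an arc running through the connecting region of the boundary connect sum replaces $b_K$ by a new band $b_K'$, leaving $b_J$ unchanged. In the new basis $\{[b_K'],[b_J]\} = \{[b_K]+[b_J],[b_J]\}$ for $H_1(F;\mathbb{Z}/2)$ we compute $w_1(F)([b_K']) = 1+1 = 0$, so $b_K'$ has an even number of half-twists and is the orientation-preserving band, while $b_J$ remains orientation-reversing. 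The core of $b_K'$ is the band sum of the original cores $K$ and $J$ performed along the slide arc, which by construction sits inside a ball where each of $K$ and $J$ appears as a single unknotted arc; hence this band sum realizes the connected sum $K\#J$.

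\emph{Main obstacle.} The subtle step is the band slide, where the twist parity of $b_K'$ and the knot type of its core must be controlled simultaneously. The parity claim is automatic from the additivity of $w_1$ in the new basis, but identifying the new core as the connected sum $K\#J$ rather than a more complicated knotted or twisted band sum requires routing the slide arc through the standardly embedded ball housing the connecting band of $M_K^{p}\natural M_J^{-p}$, so that the resulting band sum of cores is performed along a trivial arc in $\mathbb{S}^3$.
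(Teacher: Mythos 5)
Your proposal is correct and follows essentially the same route as the paper: recognize that $K_{(2,p)}$ and $J_{(2,-p)}$ bound M\"obius bands of framing $2p$ and $-2p$, boundary connect sum to get the punctured Klein bottle with zero framing, and then slide one band over the other to produce the orientation-preserving band with core $K\#J$. Your $w_1$-parity computation and the remark about routing the slide arc through the connecting region are just a more explicit version of what the paper leaves implicit.
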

\begin{proof}$K_{(2,p)}$ and $J_{(2,-p)}$ bound M\"{o}bius bands with framing $2p$ and $-2p$ respectively, by the definition of the cabling operation. Taking the boundary connected sum of the two M\"obius bands gives us a punctured Klein bottle with zero framing. However, while the obtained surface is in disk-band form it does not have an orientation-preserving band (yet). We can obtain one by sliding one of the erstwhile M\"obius bands over the other. This results in an orientation preserving band whose core has the knot type $K\#J$.\end{proof}

The above proposition also implies that the $(2,1)$--cable of any knot $K$ bounds a punctured Klein bottle $F$ with zero framing, where the knot type of the orientation-preserving band of $F$ is $K$ (by letting $J$ be the unknot).

\section{Concordance invariants}

We recall the notation for infection on a knot, as described in \cite{COT04}. We start with a \textit{pattern knot} $R$, and an unknotted curve $\eta$ in $\mathbb{S}^3-R$ (the \textit{axis of infection}). Since $\eta$ is unknotted it bounds a disk. Tie all the strands of $R$ passing through this disk into some knot $J$, the \textit{infecting knot}. We make sure that any parallel strands being tied into $J$ have zero linking with one another. We obtain a knot as the result of infection and denote it by $R(\eta,J)$. It is easily seen that the above is an untwisted satellite operation. 

\begin{prop}\label{actuallyinfection} If a knot $K$ bounds a punctured Klein bottle $F$ with zero framing, then $K$ is smoothly concordant to a knot $R'=R(\eta,J)$, where $R$ is a ribbon knot bounding a punctured Klein bottle with zero framing, $J$ is the knot type of the core of the orientation-preserving band of $F$ given in disk-band form, and $\eta$ is a curve as shown in Figure \ref{kleinex}.\end{prop}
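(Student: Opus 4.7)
The plan is to realize $K$ as an infection $R(\eta,J)$ on the nose (hence in particular concordant to $R(\eta,J)$) by localizing the knotting of the orientation-preserving band's core inside a small ball, and then to verify that the ``untied'' knot $R$ obtained by trivializing this local knot is ribbon.

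First I would put $F$ in disk-band form with one orientation-preserving band $b_1$ (whose core represents the knot type $J$) and one orientation-reversing band $b_2$. After ambient isotopy of $\mathbb{S}^3$ and band slides, I would arrange that all of the knotting of $b_1$'s core is concentrated in a $3$-ball $B\subset\mathbb{S}^3$: namely, $B\cap b_1$ is a $2$-cable of an arc $\alpha\subset B$ with knot type $J$ rel endpoints, while outside $B$ the surface is a standardly-embedded disk with $b_2$ and the remainder of $b_1$ attached in a prescribed simple form (the form depicted in Figure \ref{kleinex}). Let $R$ be the knot obtained by replacing $\alpha$ with the trivial arc $\alpha_0\subset B$ having the same endpoints, and let $F_R$ be the resulting punctured Klein bottle. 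The framing computation reviewed in Section 2 depends only on the crossings and twists of the bands, none of which change under a local replacement inside $B$; hence $\mathcal{F}(F_R)=0$.

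Next, let $\eta$ be a small meridional loop of $b_1$ chosen near $\partial B$ as in Figure \ref{kleinex}. The disk bounded by $\eta$ meets $b_1$ transversely in a small arc crossing the two edges of the band, so the two strands of $R$ passing through this disk are zero-linked parallel copies of a subarc of $\alpha_0$. By definition of infection, $R(\eta,J)$ ties these two strands into $J$, which precisely reintroduces $\alpha$ in place of $\alpha_0$ inside $B$ and therefore recovers $K$. Thus $K=R(\eta,J)$ up to ambient isotopy, giving the desired smooth concordance. Finally, to see that $R$ is ribbon, I would use the standardized disk-band form of $F_R$: the core of the OP band of $F_R$ is isotopic to the unknotted arc $\alpha_0$, and by the formula from Proposition \ref{||curve} it has self-linking $\tfrac{1}{4}\mathcal{F}(F_R)=0$. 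Surgering $F_R$ along this unknotted zero-self-linking curve then yields a ribbon disk for $R$ (see the remark after Proposition \ref{genusdec}); alternatively, one can read a ribbon move directly off the standard picture.

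\textbf{Main obstacle.} The delicate step is the standardization in Step~1: one must put $F$ into a disk-band form in which all the complexity that is not the knot type of $b_1$'s core (for example, any knotting of $b_2$, linking between $b_1$ and $b_2$ outside of $B$, and extra twisting of $b_2$) is already in the fixed, simple shape that makes $R$ manifestly ribbon. Absorbing these features is presumably where genuine concordance --- rather than mere isotopy --- enters, which is why the proposition only asserts that $K$ is concordant to $R(\eta,J)$. Getting the target form to match exactly the curve $\eta$ shown in Figure \ref{kleinex}, while keeping the concordance explicit, is the main thing that needs to be checked carefully.
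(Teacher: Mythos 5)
Your proposal takes a genuinely different route from the paper, and that route has a gap you yourself flag as the ``main obstacle.''

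You try to realize $K = R(\eta,J)$ on the nose by \emph{untying} the core of the orientation-preserving band: you want a ball $B$ in which $b_1$ is a $2$-cable of a $J$-knotted arc $\alpha$, with \emph{all} of $b_2$, the disk, and the remainder of $b_1$ sitting outside $B$ in the fixed shape of Figure~\ref{kleinex}. That standardization is not justified, and in general it cannot be achieved. The knot type of $\gamma_1$ is only one piece of the isotopy data of the embedded surface $F$: the band $b_2$ can be knotted, $b_1$ and $b_2$ can be linked, and these features are entangled with $\gamma_1$'s knotting in $\mathbb{S}^3$. Indeed, if your standardization were always possible, every zero-framed punctured Klein bottle would be determined up to ambient isotopy by the single knot $J$, which is false. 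Even the weaker requirement --- a ball $B$ meeting $F$ only in the $2$-cable of a $J$-arc, with $\gamma_1 - B$ trivial --- already says $K$ is literally a satellite of $J$ (the pattern being $R$), and there is no reason the boundary of an arbitrary punctured Klein bottle should be an honest satellite of its OP-band core. Once the on-the-nose equality $K=R(\eta,J)$ fails, both your identification of $R$ and your ribbonness argument for $R$ (which uses the standardized picture) lose their footing. Your closing guess that ``this is where concordance enters'' is also not how the paper handles it.

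The paper's proof, following \cite{CFrT09}, goes the other way and avoids the issue entirely. Rather than removing the knotting of $\gamma_1$, it \emph{adds} to it, working in a tiny ball $B_E$ (a thickened meridional disk $E$ of $b_1$) where $F$ is a $2$-cable of the \emph{trivial} tangle $T$; no global control over $b_2$ or the disk is needed. Replacing $T$ by $-J$ produces $R$, whose OP-band core is $-J\#J$, which is ribbon, so $R$ is ribbon by the remark after Proposition~\ref{genusdec}. Then $R(\eta,J)$ is $K$ with $T$ replaced by $-J\#J$, and since the trivial tangle is tangle-concordant to $-J\#J$, the $2$-cables are concordant, giving the concordance $K\sim R(\eta,J)$. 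This ``connect-sum with the inverse'' trick is exactly what makes the ribbon pattern robust to an arbitrary embedding of $F$, and it is the step you would need to import to repair your proposal.
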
 
\begin{figure}[ht!]
  \centering
  \includegraphics{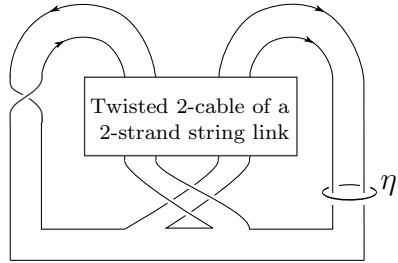}
  \put(-1.5,0.8){\tiny Twisted 2-cable of a}
  \put(-1.45,0.66){\tiny 2-strand string link}
  \put(0.02,0.4){$\eta$}
  \caption[]{Knot bounding a punctured Klein bottle with zero framing. The core of the orientation-preserving band is shown.}\label{kleinex}
\end{figure}
\begin{figure}[b]
  \centering
  \includegraphics{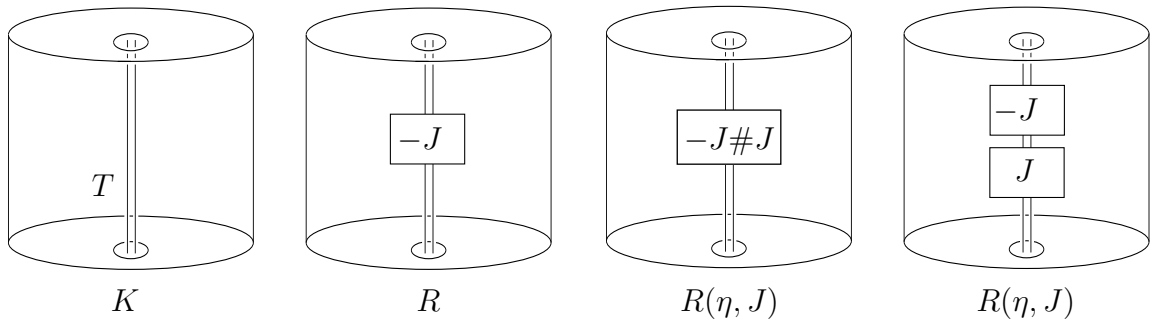}
  \put(-5.55,0.4){$T$}
  \put(-3.95,0.65){$-J$}
  \put(-2.45,0.65){$-J \# J$}
  \put(-0.83,0.8){$-J$}
  \put(-0.72,0.48){$J$}
  \put(-5.45,-0.2){$K$}
  \put(-3.85,-0.2){$R$}
  \put(-2.47,-0.2){$R(\eta,J)$}
  \put(-0.92,-0.2){$R(\eta,J)$}
  \caption[Proof of Proposition \ref{actuallyinfection}]{Proof of Proposition \ref{actuallyinfection}}\label{tangles}
\end{figure}
\begin{proof} We will follow the proof of Proposition 1.7 in \cite{CFrT09}. We isotope $F$ into a disk-band form with an orientation-preserving band, as in the proof of Proposition \ref{||curve}. We also know from Proposition \ref{||curve} that the core of the orientation-preserving band has zero self-linking. As a result, surgering along the core would give us a slice disk for the knot, as shown in Proposition \ref{genusdec}. Let $J$ denote the tangle whose closure is the knot type of the core of the orientation-preserving band of $F$. Notice that the orientation-preserving band can then be considered to be the (untwisted) 2-cable of $J$. Consider a curve $\eta$ linking once with the orientation-preserving band of $F$. It bounds a disk $E\subseteq \mathbb{S}^3$. If we thicken $E$ we get the local picture shown in Figure \ref{tangles}, where the orientation-preserving band appears as the 2-cable of the trivial tangle $T$. Replace the 2-cable of $T$ by the 2-cable of $-J$, and call the resulting knot $R$. Notice that this results in a new punctured Klein bottle, also with zero framing. The knot $R$ now bounds a punctured Klein bottle where the knot type of the orientation-preserving band is $-J \# J$, which is ribbon. By Proposition \ref{genusdec}, by surgering along the core of the orientation-preserving band, we see that the knot $R$ is also ribbon. 

Now consider the knot $R'$ obtained from $K$ by replacing $T$ by the 2-cable of $-J \# J$. Note that $R' = R(\eta, J)$, i.e.\ the infection of $R$ by $J$ along the curve $\eta$, by the equivalence of the last two panels of Figure \ref{tangles}. Since the trivial tangle is smoothly concordant to $-J \# J$, their 2-cables are also smoothly concordant. By modifying the trivial concordance from $K$ to itself by the tangle concordance between the 2-cables of the trivial tangle and $-J\#J$, we see that $R'=R(\eta,J)$ is smoothly concordant to $K$.\end{proof}

Recognizing that our knots are secretly infections provides a fair amount of information about certain knot invariants: 

\begin{prop} If a knot $K$ bounds a punctured Klein bottle $F$ with zero framing, and $J$ is the knot type of the core of the orientation-preserving band of any disk-band form for $F$, we have the following results:  
\begin{enumerate}
\item[a.] Arf $(K)=0$.
\item[b.] $\sigma_K(\omega)=\sigma_J(\omega^2)$, for all but finitely many $\omega$.
\item[c.] $K$ has (ordinary) signature $0$.
\item[d.] $|\tau(K)-2\tau(J)|\leq 4$.
\end{enumerate}
Here $\sigma_\cdot(\omega)$ denotes the Levine-Tristram signature function, and $\tau$ is the Floer homology invariant of Ozsv\'ath-Szab\'o \cite{OS03} and Rasmussen \cite{Ras03}.\end{prop}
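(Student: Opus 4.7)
The plan is to use Proposition \ref{actuallyinfection} to reduce all four parts to a single structural fact: $K$ is smoothly concordant to the infection $R(\eta, J)$, where $R$ is a ribbon knot bounding a punctured Klein bottle with zero framing and $\eta$ is the unknot meeting the orientation-preserving band of this Klein bottle in a meridional disk. Since that band is an untwisted $2$-cable of the core $J$, its two long edges are parallel strands of $R$, so $\lk(\eta, R) = 2$. Thus $R(\eta, J)$ is an untwisted satellite of $J$ with winding number $w = 2$ and ribbon pattern $R$. All four invariants in the statement are concordance invariants, so computing each of them on $K$ is the same as computing them on $R(\eta, J)$, and each claim will then follow from a standard satellite formula with $w = 2$, combined with the vanishing of the relevant invariant on the slice knot $R$.

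For (b), I would apply Litherland's satellite formula for the Levine--Tristram signature, $\sigma_{P(J)}(\omega) = \sigma_P(\omega) + \sigma_J(\omega^w)$, valid for all but finitely many $\omega$ on the unit circle. With pattern $P = R$, winding $w = 2$, and $\sigma_R \equiv 0$ (since $R$ is slice), this gives $\sigma_K(\omega) = \sigma_J(\omega^2)$. Part (c) is then immediate by setting $\omega = -1$, since $\sigma_J(1) = 0$ for every knot and $-1$ is never one of the finite exceptions (they arise from roots of $\Delta_J$ at $\omega^2 = 1$). For (a), I would use the Alexander polynomial satellite formula $\Delta_{R(\eta, J)}(t) = \Delta_R(t)\, \Delta_J(t^2)$; evaluating at $t = -1$ gives $\Delta_K(-1) = \Delta_R(-1) \cdot \Delta_J(1) = \Delta_R(-1)$. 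Combining the classical characterization Arf$(L) = 0 \iff \Delta_L(-1) \equiv \pm 1 \pmod 8$ with Arf$(R) = 0$ (ribbon) yields Arf$(K) = 0$.

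For (d), the plan is to invoke a $\tau$-satellite inequality of the form $|\tau(P(J)) - w\tau(J)| \leq C(P)$, where $C(P)$ depends only on the pattern $P$ (such bounds appear in work of Van Cott, Hedden, and Hom). With $w = 2$ and pattern $R$, working out the constant for the concrete pattern coming from the punctured Klein bottle of Proposition \ref{actuallyinfection} should yield $|\tau(K) - 2\tau(J)| \leq 4$. One concrete route is to construct a cobordism in $\mathbb{S}^3 \times [0,1]$ between $K$ and the $2$-cable $J_{(2,0)}$ by pushing the punctured Klein bottle into the four-dimensional cobordism and using a genus bound on an associated surface, then comparing $\tau$ across the cobordism via the $4$-genus inequality $|\tau(K_0) - \tau(K_1)| \leq g_4(\Sigma)$.

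The main obstacle is part (d). Parts (a)--(c) are essentially immediate from exact satellite formulas whose pattern-side inputs vanish because $R$ is ribbon. The $\tau$-bound, by contrast, is only an inequality, and extracting the constant $4$ requires either isolating the correct version of the Van Cott/Hom-type inequality and bounding the relevant complexity of the pattern $R$ in the solid torus $\mathbb{S}^3 \setminus N(\eta)$, or explicitly producing a low-genus cobordism between $K$ and an appropriate $2$-cable of $J$---for instance by exploiting a ribbon disk for $R$ together with the disk-band structure of the punctured Klein bottle. The bookkeeping in this step is the crux; everything else is formal.
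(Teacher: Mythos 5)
Your treatment of parts (a)--(c) is exactly the paper's proof: reduce to $R(\eta,J)$ via Proposition \ref{actuallyinfection}, invoke the multiplicativity $\Delta_{R(\eta,J)}(t)=\Delta_R(t)\Delta_J(t^2)$ and the Levine--Murasugi characterization of Arf, then Litherland's satellite formula $\sigma_{R(\eta,J)}(\omega)=\sigma_R(\omega)+\sigma_J(\omega^2)$ with $\sigma_R\equiv 0$ for $R$ ribbon, and specialize to $\omega=-1$. Nothing to add there.

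Part (d) is where you correctly identify that something more is needed, and this is a genuine gap in your writeup. You guess at an inequality of the form $|\tau(P(J))-w\tau(J)|\le C(P)$ and say the constant ``should'' work out to $4$, but you never pin down the statement that produces it. The paper cites a specific result (Theorem 1.2 of the cited Roberts paper, in the Van Cott/Hedden/Hom line you gesture toward) which reads
$$-n_+(R)-l \ \le\ \tau\bigl(R(\eta,J)\bigr)-\tau(R)-l\,\tau(J)\ \le\ n_+(R)+l,$$
where $l=\lk(R,\eta)$ and $n_+(R)$ is the minimal number of positive intersections of $R$ with a disk bounded by $\eta$. In the present situation $\eta$ meets the orientation-preserving band of the punctured Klein bottle in a disk crossed twice positively, so $n_+(R)=2$, $l=2$, and $\tau(R)=0$ since $R$ is ribbon, giving $-4\le \tau(K)-2\tau(J)\le 4$. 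The essential missing ingredient is this two-parameter bound depending on both $l$ and $n_+(R)$; a bound depending only on the winding number would not give the constant $4$ without an argument. Your alternate suggestion of building an explicit low-genus cobordism between $K$ and a $2$-cable of $J$ is a plausible independent route, but it is unworked, and one would still need to produce a surface of genus at most $2$ between the two knots to recover the same constant via the $\tau$-genus inequality.
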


\begin{proof}We know that Arf $(K)=0$ iff $\Delta_K(-1) \equiv \pm 1 \mod 8$ for any knot $K$ \cite{Mur69}, where $\Delta_K(t)$ is the Alexander polynomial. On the other hand, since $\lk(R,\eta)=2$,$$\Delta_{R(\eta,J)}(t)=\Delta_R(t)\Delta_J(t^2)$$ that is, $\Delta_{R(\eta,J)}(-1)=\Delta_R(-1)\Delta_J(1)$. We know that $\Delta_J(1)=\pm 1$. Since the Arf invariant is a concordance invariant, $R$ is ribbon, and $R(\eta,J)$ is smoothly concordant to $K$, we have that $\text{Arf} (R(\eta,J))=\text{Arf}(K)$ and $\Delta_R(-1)= \pm 1 \mod 8$. Part a.\ follows.

For Part b. we have from \cite{Li77, LivM85} that $$\sigma_{R(\eta,J)}(\omega)=\sigma_R(\omega)+\sigma_J(\omega^2)$$ since $\eta$ has winding number 2, for all $\omega$ except the roots of the Alexander polynomials of $R(\eta,J)$, $J$ and $R$. Since $R$ is ribbon, $\sigma_R(\omega)$ is the zero function, except at the roots of $\Delta_R(t)$. Part b, follows. Part c. follows as well by setting $\omega = -1$. 

We have from Theorem 1.2 in \cite{Rob12} that 
$$-n_+(R)-l \leq \tau(R(\eta,J)) - \tau(R) - l\tau(J) \leq n_+(R) + l$$ where $l=\lk(R,\eta)$ and $n_+(R)$ is the least number of positive intersections between $R$ and a disk bounded by $\eta$. In our case, we have $n_+(R)=l=2$ and since $R$ is smoothly slice, $\tau(R)=0$. Also, since $\tau$ is an invariant of smooth concordance, $\tau(R(\eta,J))=\tau(K)$. Therefore, $-4 \leq \tau(K)-2\tau(J)\leq 4$, proving Part d. \end{proof}

\begin{prop}If $K$ is slice and bounds a punctured Klein bottle $F$ with zero framing, then $J$, the knot type of the core of the orientation preserving band in any disk-band form for $F$, is 2-torsion in the algebraic knot concordance group.\end{prop}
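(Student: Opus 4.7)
The plan is to use Proposition~\ref{actuallyinfection} to reduce the claim to a Seifert-matrix computation for a winding-$2$ satellite. That proposition shows $K$ is smoothly concordant to $R(\eta,J)$, where $R$ is a ribbon knot bounding a punctured Klein bottle with zero framing and $\lk(R,\eta)=2$. Since algebraic concordance is a smooth concordance invariant, $[K]=[R(\eta,J)]$ in $\mathcal{AC}$. The slice hypothesis gives $[K]=0$, and the ribbon hypothesis gives $[R]=0$.

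My approach is to construct an explicit Seifert surface for $R(\eta,J)$ and analyze the resulting Seifert matrix. Take a Seifert surface $F_R$ for $R$, and after isotopy arrange that $\eta$ meets $F_R$ transversely in two positive points. Excise a small disk from $F_R$ at each intersection point and glue in a parallel Seifert surface $F_J^{(i)}$ for $J$, producing a Seifert surface of genus $g(F_R)+2g(F_J)$. In the natural basis $\{e_i\}\cup\{f_j^{(1)}\}\cup\{f_j^{(2)}\}$, the Seifert matrix decomposes as a block matrix with diagonal blocks $V_R$, $V_J$, $V_J$ and off-diagonal cross-terms. By keeping the $F_J^{(i)}$ in a small tubular neighborhood of $\eta$ disjoint from the cycles of $F_R$, the cross-terms between $V_R$ and the $V_J$-blocks can be arranged to vanish; the cross-terms between $F_J^{(1)}$ and $F_J^{(2)}$, which arise from their being parallel pushoffs glued along longitudes of $J$, are controlled by $V_J$ itself.

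The key step is to show that this Seifert matrix is Witt-equivalent to $V_R\oplus V_{J\#J}$, which would give $[R(\eta,J)]=[R]+2[J]$ in $\mathcal{AC}$ and hence $2[J]=0$. I expect the main obstacle to be this Witt-equivalence. A priori it could fail: the satellite signature formula $\sigma_{R(\eta,J)}(\omega)=\sigma_R(\omega)+\sigma_J(\omega^2)$ from Proposition~2.9(b) differs in general from $\sigma_{R\#J\#J}(\omega)=\sigma_R(\omega)+2\sigma_J(\omega)$. The resolution is to use the slice assumption on $K$, which combined with $\sigma_R\equiv 0$ forces $\sigma_J(\omega^2)\equiv 0$, hence $\sigma_J\equiv 0$; this places $[J]$ in the torsion part of $\mathcal{AC}$ (so that the integer Levine invariants automatically agree on both sides) and brings the two signature functions into agreement. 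The remaining torsion-level identification in the $\mathbb{Z}/2$ and $\mathbb{Z}/4$ summands of Levine's classification is the hardest step; I would carry it out by exhibiting, directly from the explicit block structure of the Seifert matrix, a half-dimensional Lagrangian for $V_J\oplus V_J$, using the parallel-pushoff relations between $F_J^{(1)}$ and $F_J^{(2)}$ and the fact that the winding-$2$ structure imposes compatibility between the two copies.
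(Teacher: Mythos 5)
Your plan hinges on showing $[R(\eta,J)]=[R]+2[J]$ in $\mathcal{AC}$, but this identity is false in general, and the way you propose to salvage it is circular. For a winding-number-two satellite the correct algebraic statement (used in the paper) is $[R(\eta,J)]=[R]+f([J])$, where $f:\mathcal{AC}\to\mathcal{AC}$ is the map induced by $t\mapsto t^2$ on the Blanchfield form, \emph{not} multiplication by $2$. Already at the level of the Alexander module these differ: $R(\eta,J)$ has $\Delta_{R(\eta,J)}(t)=\Delta_R(t)\,\Delta_J(t^2)$, whereas $R\#J\#J$ has $\Delta_R(t)\,\Delta_J(t)^2$. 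Correspondingly, the Seifert surface you describe, built from $F_R$ and two parallel pushoff copies of $F_J$, does \emph{not} have block-diagonal Seifert matrix $V_R\oplus V_J\oplus V_J$; the pushoff relations force nonzero off-diagonal blocks between the two copies of $F_J$, and it is precisely these cross-terms that produce $\Delta_J(t^2)$ rather than $\Delta_J(t)^2$. So the ``Witt-equivalence to $V_R\oplus V_{J\#J}$'' step is not a technical gap but a wrong intermediate claim.

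Your salvage does not close the gap. Observing that $\sigma_K\equiv 0$ forces $\sigma_J(\omega^2)\equiv 0$, hence $\sigma_J\equiv 0$, kills only the integer (signature) part of Levine's invariant; it says nothing about the $\mathbb{Z}/2$ and $\mathbb{Z}/4$ summands, which is exactly where $2$-torsion lives. For the hard part you say you would ``exhibit a half-dimensional Lagrangian for $V_J\oplus V_J$ directly from the block structure'' --- but producing such a Lagrangian is verbatim the statement $2[J]=0$ that you are trying to prove, and the metabolizer you actually have (from sliceness of $K$) lives in the full form of $R(\eta,J)$, which is \emph{not} $V_R\oplus V_J\oplus V_J$, so you cannot restrict it. What is genuinely needed here --- and what the paper supplies by citation --- is the nontrivial algebraic fact that the kernel of the squaring map $f$ on $\mathcal{AC}$ consists of $2$-torsion elements (Cochran--Orr, see also Cha--Livingston--Ruberman). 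The paper's proof computes $f([J])=[R(\eta,J)]=[K]=0$ using the Livingston--Melvin description of the Alexander module and Blanchfield form of a winding-two satellite, and then invokes this structural result. Your proposal has no substitute for that last step, and the rest of the argument is built on an incorrect block decomposition, so there is a real gap.
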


\begin{proof}Let $\mathcal{AC}$ denote the algebraic knot concordance group, considered as the Witt group of nonsingular linking forms over certain torsion $\integers[t,t^{-1}]$-modules \cite{Kear75}. Given a knot $K$, the corresponding element of $\mathcal{AC}$ may be denoted by $(\mathcal{A}(K), \Bl(K))$, where $\mathcal{A}(K)$ is the Alexander module of $K$ and $\Bl(K)$ is the Blanchfield linking form. That is, if we denote algebraic concordance class by $[\,\cdot\,]$, $[K]= (\mathcal{A}(K),\Bl(K))$.

Consider the map $f:\mathcal{AC}\rightarrow \mathcal{AC}$, induced by $t \mapsto t^2$ (described in greater detail in \cite{Cha07}). We will show that $f([J]) = [R(\eta,J)]=[K]$, where $R$, $\eta$, $J$ are as in Proposition \ref{actuallyinfection}. We know from \cite{LivM85} that $$\mathcal{A}(R(\eta,J))=\mathcal{A}_0(R) \oplus \left(\mathcal{A}_0(J) \otimes_{\mathbb{Z}[t,t^{-1}]} W\right)$$ where $W=\mathbb{Z}[t,t^{-1}]$ as a $\mathbb{Z}[t,t^{-1}]$ module, where $t$ acts by $t\mapsto t^2$, since the winding number of $\eta$ is $2$. The map $t \mapsto t^2$ induces a similar transformation on the Blanchfield linking forms, that is, if $B_\cdot(t)$ is a matrix representing the Blanchfield linking form $$B_{R(\eta,J)}(t)=B_R(t) \oplus B_J(t^2)$$ We denote this new Blanchfield form as $\Bl(R(\eta,J))=\Bl(R)\oplus \left(\Bl(J) \otimes_{\mathbb{Z}[t,t^{-1}]} W\right)$ where $W$ is as above.

Since $R$ is a ribbon knot, $(\mathcal{A}(R),\Bl(R))$ is the zero Witt class in $\mathcal{AC}$. Therefore, $$(\mathcal{A}(R(\eta,J)),\Bl(R(\eta,J)))\cong \left(\mathcal{A}(J) \otimes_{\mathbb{Z}[t,t^{-1}]} W\, ,\, \Bl(J) \otimes_{\mathbb{Z}[t,t^{-1}]} W\right)$$ as claimed.

If the knot $R(\eta, J)$ is itself slice, we see that $\left(\mathcal{A}(J) \otimes_{\mathbb{Z}[t,t^{-1}]} W\, ,\, \Bl(J) \otimes_{\mathbb{Z}[t,t^{-1}]} W\right)$ is $0$ in $\mathcal{AC}$, i.e.\, $f([J])=0$. But we know from \cite[Proposition 2.1]{CO93}(See also \cite[Theorem 6]{ChaLivRu08}) that knots in the kernel of the map $f$ induced by $t \mapsto t^2$ must be $2$-torsion in $\mathcal{AC}$.\end{proof}

\section{Homology cobordism of zero-surgery manifolds}

Given a knot $K$, one frequently studies the associated 3-manifold, $M_K$, obtained by performing zero-framed surgery on $K$ in $\mathbb{S}^3$. Suppose the knots $K_0,K_1 \subseteq \mathbb{S}^3$ are concordant via an annulus $A\subseteq \mathbb{S}^3\times [0,1]$. By Alexander duality, the exterior of $A$ is a $\integers$-homology cobordism between the exteriors of $K_0$ and $K_1$. If we then adjoin a zero-framed $\mathbb{S}^1\times\mathbb{D}^2\times [0,1]$ to the homology cobordism between exteriors, we get a homology cobordism between $M_{K_0}$ and $M_{K_1}$. This observation has a converse when one of the knots is the unknot: 

\begin{prop}[Proposition 1.2 from \cite{CFHeHo13}] Suppose $K$ is any knot in $\mathbb{S}^3$ and  $U$ is the trivial knot. Then $M_K$ is smoothly homology cobordant to $M_U$ via a cobordism $V$ whose $\pi_1$ is normally generated by a meridian of $K$ if and only if $K$ bounds a smoothly embedded disk in a smooth manifold that is homeomorphic to $\mathbb{B}^4$. \end{prop}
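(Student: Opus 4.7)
The plan is to prove each direction by an explicit cut-and-paste construction, using Mayer--Vietoris, Seifert--van Kampen, and Freedman's theorem for the harder direction.

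For the direction ``cobordism $\Rightarrow$ disk'', suppose I am given a homology cobordism $V$ from $M_K$ to $M_U$ with $\pi_1(V)$ normally generated by a meridian $\mu$ of $K$. First I would attach a $4$-dimensional $2$-handle $h$ to $V$ along $\mu\subseteq M_K$ with the framing that undoes the zero-surgery on $K$; the $M_K$ end of the resulting $4$-manifold then becomes $\mathbb{S}^3$, and the cocore of $h$ is a smoothly embedded disk $D$ with $\partial D=K$. Next I would cap the remaining $M_U=\mathbb{S}^1\times\mathbb{S}^2$ boundary by gluing in a copy of $\mathbb{S}^1\times\mathbb{D}^3$, producing a smooth $4$-manifold $W$ with $\partial W=\mathbb{S}^3$ and $D\subseteq W$. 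Seifert--van Kampen gives $\pi_1(W)=1$: the $2$-handle kills the normal closure of $\mu$ in $\pi_1(V)$, and the subsequent gluing of $\mathbb{S}^1\times\mathbb{D}^3$ along its boundary does not change $\pi_1$ since the inclusion $\mathbb{S}^1\times\mathbb{S}^2\hookrightarrow\mathbb{S}^1\times\mathbb{D}^3$ is a $\pi_1$-isomorphism. A Mayer--Vietoris bookkeeping, using that $V$ has the integral homology of $\mathbb{S}^1\times\mathbb{S}^2$, shows $H_*(W)\cong H_*(\mathrm{pt})$. Since $W$ is a smooth, simply connected $\mathbb{Z}$-homology $4$-ball with $\partial W=\mathbb{S}^3$, Freedman's theorem yields a homeomorphism $W\cong_{\mathrm{top}}\mathbb{B}^4$, and $D$ is the desired slice disk.

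Conversely, suppose I am given a smoothly embedded disk $D\subseteq B$ with $\partial D=K$ in a smooth $B\cong_{\mathrm{top}}\mathbb{B}^4$. I would set $V_0=B\setminus\nu(D)$. A direct inspection of the trivialization $\nu(D)\cong D\times\mathbb{D}^2$ identifies $\partial V_0=(\mathbb{S}^3\setminus\nu(K))\cup(D\times\mathbb{S}^1)$ with zero-framed surgery on $K$, i.e.\ $M_K$. Mayer--Vietoris applied to $B=V_0\cup\nu(D)$ yields $H_*(V_0)\cong H_*(\mathbb{S}^1)$, and Seifert--van Kampen combined with $\pi_1(B)=1$ shows $\pi_1(V_0)$ is normally generated by $\mu$. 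To produce the cobordism, I would push $\mu$ off $M_K$ into the interior of $V_0$, excise an open tubular neighborhood $\nu(\mu)\cong\mathbb{S}^1\times\mathbb{D}^3$, and set $V=V_0\setminus\mathrm{int}\,\nu(\mu)$. Then $\partial V=M_K\sqcup\partial\nu(\mu)=M_K\sqcup M_U$, and a second Mayer--Vietoris (using that $\mu$ generates $H_1(V_0)$) shows that both inclusions $M_K\hookrightarrow V$ and $M_U\hookrightarrow V$ induce isomorphisms on integral homology, so $V$ is a $\mathbb{Z}$-homology cobordism. The $\pi_1$-condition passes from $V_0$ to $V$ by another van Kampen calculation giving $\pi_1(V)\cong\pi_1(V_0)$.

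The single nonformal ingredient is Freedman's theorem in the first direction; this is precisely where the hypothesis ``homeomorphic to $\mathbb{B}^4$'' (as opposed to the strictly stronger ``diffeomorphic'') is unavoidable. The remaining subtlety is purely framing bookkeeping: verifying that the framing of $\mu$ used in the $2$-handle attachment actually undoes the zero-surgery on $K$, which follows directly from the description of $M_K$ as $(\mathbb{S}^3\setminus\nu(K))\cup(\mathbb{D}^2\times\mathbb{S}^1)$ --- in the glued-in solid torus, $\mu$ sits isotopically as the core circle, and the $2$-handle attached along the corresponding ``longitudinal'' framing restores the missing $\nu(K)$, recovering $\mathbb{S}^3$.
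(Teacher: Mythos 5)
This proposition is quoted from \cite{CFHeHo13} and is not reproved in the present paper, so there is no in-paper proof against which to compare; I'll instead comment on the correctness and level of detail of your argument, which follows the standard (and essentially unavoidable) cut-and-paste route. Both directions are sound. Two points deserve a little more care than you give them. First, in the ``disk $\Rightarrow$ cobordism'' direction, the identification $\partial(B\setminus\nu(D))\cong M_K$ requires that the trivialization of $\nu(D)$ restricted to $K$ be the \emph{zero} framing; this holds because $H_2(B)=0$ forces the relative self-intersection number of $D$ to vanish, but you state it as ``a direct inspection of the trivialization'' without invoking $H_2(B)=0$, which is the actual content. Second, the claim that removing $\nu(\mu)$ from $V_0$ yields a cobordism in which \emph{both} inclusions $M_K,M_U\hookrightarrow V$ are homology isomorphisms is not a single Mayer--Vietoris computation: the $M_U$ inclusion follows by excision/Mayer--Vietoris (using that $\mu$ generates $H_1(V_0)\cong\mathbb{Z}$, so $\nu(\mu)\hookrightarrow V_0$ is a homology isomorphism, whence $H_*(V,M_U)\cong H_*(V_0,\nu(\mu))=0$), but for the $M_K$ side you should appeal to Poincar\'e--Lefschetz duality, $H_k(V,M_K)\cong H^{4-k}(V,M_U)=0$, rather than leaving it to the reader. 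With those two clarifications your proof is complete; the appeal to Freedman's theorem in the forward direction is correctly identified as the one non-elementary input and is exactly why the conclusion is ``homeomorphic'' rather than ``diffeomorphic'' to $\mathbb{B}^4$.
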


This result gives us a way of translating information about zero-surgery manifolds to information about concordance relationships between knots. Here, we will use a related result for $R$-homology cobordisms: 
\begin{prop}[Proposition 1.5 from \cite{CFHeHo13}]\label{homcobtoslice} Suppose $K$ is any knot in $\mathbb{S}^3$ and $R\subseteq \mathbb{Q}$ is a non-zero subring. Let $U$ denote the trivial knot. Then $M_K$ is smoothly $R$-homology cobordant to $M_U$ if and only if $K$ is smoothly $R$-concordant to $U$ i.e.\ $K$ is smoothly $R$-slice.\end{prop}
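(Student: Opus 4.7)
My plan is to prove both directions of the equivalence separately, with the reverse implication being the more substantive one.

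For the forward direction, I would adapt the argument sketched in the paragraph preceding the proposition to $R$-coefficients. Namely, given an $R$-concordance annulus $A$ properly embedded in an $R$-homology $\mathbb{S}^3 \times [0,1]$ denoted $W$, the exterior $W \setminus \nu(A)$ should be an $R$-homology cobordism between the two knot exteriors. In the integral case one invokes Alexander duality; for a general subring $R \subseteq \mathbb{Q}$ I would instead run a Mayer--Vietoris computation, decomposing $W$ as $\nu(A) \cup (W \setminus \nu(A))$ along $T^2 \times [0,1]$, to verify vanishing of the relevant relative $R$-homology groups. Adjoining $\mathbb{S}^1 \times \mathbb{D}^2 \times [0,1]$ along the boundary torus cobordism then yields the desired $R$-homology cobordism from $M_K$ to $M_U$.

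For the converse, suppose $V$ is a smooth $R$-homology cobordism from $M_K$ to $M_U = \mathbb{S}^1 \times \mathbb{S}^2$. The goal is to build a smooth 4-manifold $Z$ with $\partial Z = \mathbb{S}^3$ which is an $R$-homology $\mathbb{B}^4$ and in which $K$ bounds a smooth disk. Let $K^* \subseteq M_K$ denote the dual knot, i.e., the core of the surgery solid torus used to construct $M_K$. I would define $Z$ in two steps: first, attach a 2-handle to $V$ along $K^*$ with framing chosen so that the resulting Dehn surgery on $K^*$ in $M_K$ restores $\mathbb{S}^3$ --- concretely, the slope corresponding to the meridian $\mu_K$ of the original knot, since surgery on the dual knot with that slope reverses the zero-surgery on $K$; second, cap off the $M_U$ end by gluing on $\mathbb{S}^1 \times \mathbb{B}^3$. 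By design $\partial Z = \mathbb{S}^3$. Moreover, after the 2-handle attachment the newly glued solid torus in the new $\mathbb{S}^3$ is exactly $\nu(K)$, and the cocore of the 2-handle is a smooth disk in $Z$ whose boundary is the core of this solid torus, namely $K$ itself.

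The main remaining step, and the principal obstacle, is verifying that $Z$ is an $R$-homology $\mathbb{B}^4$. I would carry this out via a handle-by-handle homology computation. Since $V$ is an $R$-homology cobordism, $H_*(V;R) \cong H_*(M_K;R) \cong H_*(M_U;R)$, which has $R$ in degrees $0, 1, 2, 3$. The dual knot $K^*$ generates $H_1(M_K;R) \cong R$, so attaching the 2-handle along $K^*$ should kill $H_1$; a Mayer--Vietoris or long-exact-sequence computation should show that no new $H_2$ class appears, since the 2-chain introduced by the handle is cancelled against the $H_2$ class dual to $K^*$. Finally, capping off with $\mathbb{S}^1 \times \mathbb{B}^3$ kills the remaining $H_1$ and $H_2$ classes coming from the $\mathbb{S}^1$ and $\mathbb{S}^2$ factors of $M_U$, yielding $\tilde{H}_*(Z;R) = 0$. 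The delicate task is tracking the induced maps on homology at each stage --- in particular, confirming that the attaching circle of the 2-handle really maps to a generator of $H_1(V;R)$ under the inclusion $M_K \hookrightarrow V$, which is where the hypothesis that $V$ is an $R$-homology cobordism (rather than just some 4-manifold with the right boundary) plays its essential role.
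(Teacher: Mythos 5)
The paper does not prove this proposition---it is quoted verbatim as Proposition~1.5 of \cite{CFHeHo13} and used as a black box---so there is no in-paper argument to compare against. Evaluating your proof on its own terms: it is correct, and is the standard argument. The forward direction (annulus exterior plus Mayer--Vietoris, then re-glue $\mathbb{S}^1\times\mathbb{D}^2\times[0,1]$) goes through verbatim with $R$-coefficients once one uses $H_*(W;R)\cong H_*(\mathbb{S}^3\times[0,1];R)$. The converse construction---attach a 2-handle to $V$ along the surgery dual $K^*$ with the slope that reverses the $0$-surgery, then cap the $M_U$ end by $\mathbb{S}^1\times\mathbb{B}^3$---is exactly right, and the cocore of the 2-handle is the slice disk for $K$ in the resulting $\partial Z=\mathbb{S}^3$.

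One small clarification: the step you flag as the ``principal obstacle'' and a ``delicate task'' is in fact immediate. The hypothesis $H_*(V,M_K;R)=0$ gives at once that $H_1(M_K;R)\to H_1(V;R)$ is an isomorphism, and $[K^*]$ (equivalently a meridian of $K$) generates $H_1(M_K;R)\cong R$. Hence the boundary map $H_2(V\cup h,V;R)\cong R\to H_1(V;R)\cong R$, which carries the generator to $[K^*]$, is an isomorphism; from the long exact sequence of the pair one reads off $H_1(V\cup h;R)=0$ and $H_2(V\cup h;R)\cong H_2(V;R)\cong R$ (no new $H_2$ is born because the boundary map is injective, not because of a ``cancellation against the dual $H_2$ class''). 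The Mayer--Vietoris sequence for the $\mathbb{S}^1\times\mathbb{B}^3$ cap then kills the surviving $H_2$ and $H_3$ classes, which come from $M_U$, so $\widetilde{H}_*(Z;R)=0$ and $Z$ is an $R$-homology $\mathbb{B}^4$. Your outline is sound and complete modulo these routine diagram chases.
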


In addition, recognizing that our knots are the result of infection allows us to use the following helpful theorem from \cite{CFHeHo13}:
\begin{thm}[Theorem 2.1 from \cite{CFHeHo13}] \label{cobordismthm} Suppose $R$ is a (smoothly) $\mathbb{Z}\left[\frac{1}{n}\right]$-slice knot, and $\eta$ is an unknotted curve with non-zero winding number $n$. Then, for any knot $J$, $M_J$ is smoothly $\mathbb{Z}\left[\frac{1}{n}\right]$-homology cobordant to $M_{R(\eta,J)}$.\end{thm}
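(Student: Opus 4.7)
The plan is to build an explicit smooth cobordism $V$ from $M_J$ to $M_{R(\eta,J)}$ out of a slice disk for $R$, and then verify by Mayer--Vietoris that $V$ is a $\mathbb{Z}\left[\frac{1}{n}\right]$-homology cobordism; the need to invert $n$ enters precisely through the winding number of the infection.

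First, I would use the hypothesis that $R$ is $\mathbb{Z}\left[\frac{1}{n}\right]$-slice to fix a smooth slice disk $\Delta$ for $R$ in a smooth $\mathbb{Z}\left[\frac{1}{n}\right]$-homology $4$-ball $B$, and form the exterior $E_\Delta := B \setminus \nu(\Delta)$. Then $\partial E_\Delta = M_R$, and a standard Mayer--Vietoris computation, decomposing $B = E_\Delta \cup \nu(\Delta)$ and using that $H_*(B; \mathbb{Z}\left[\frac{1}{n}\right]) \cong H_*(\mathbb{B}^4; \mathbb{Z}\left[\frac{1}{n}\right])$, shows that $E_\Delta$ has the $\mathbb{Z}\left[\frac{1}{n}\right]$-homology of $S^1 \times D^3$, with $H_1$ generated by the meridian $\mu_R$ of $\Delta$. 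This is the essential use of the slicing hypothesis.

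Next, I would assemble the cobordism $V$ using the natural torus decompositions
\[
M_{R(\eta,J)} \;=\; \bigl(M_R \setminus \nu(\eta)\bigr) \cup_T E(J), \qquad M_J \;=\; (S^1 \times D^2) \cup_T E(J),
\]
where in each case the gluing along the torus $T$ is the one induced by the satellite construction (sending the meridian of $\eta$ to the $0$-framed longitude of $J$ on the $E(J)$ side, and vice versa). The common piece $E(J)$ suggests taking $V = C \cup_{T \times I}\bigl(E(J) \times I\bigr)$, for a suitable $4$-dimensional cobordism $C$ from $\nu(\eta) \cong S^1 \times D^2$ to $M_R \setminus \nu(\eta)$ whose restriction to the torus ends is $T \times I$. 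To build $C$, I would use $E_\Delta$ itself: $\partial E_\Delta = M_R$ splits along $T = \partial\nu(\eta)$ into $\nu(\eta)$ and $M_R \setminus \nu(\eta)$, so, after introducing a corner along $T$ and smoothing, $E_\Delta$ becomes a cobordism with sides $T \times I$ between these two pieces. Gluing in $E(J) \times I$ along $T \times I$ then produces the desired $V$ with $\partial V = M_J \sqcup -M_{R(\eta,J)}$.

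Finally, the main obstacle is to verify that $V$ is a $\mathbb{Z}\left[\frac{1}{n}\right]$-homology cobordism. I would analyze $H_*(V, M_J; \mathbb{Z}\left[\frac{1}{n}\right])$ and $H_*(V, M_{R(\eta,J)}; \mathbb{Z}\left[\frac{1}{n}\right])$ by Mayer--Vietoris applied to the decomposition $V = C \cup (E(J) \times I)$, with the homology of $E_\Delta$ as the only nontrivial input. The crucial calculation, which drives the whole proof, is that the meridian $\mu_\eta$ of $\eta$, viewed in $H_1(E_\Delta; \mathbb{Z}\left[\frac{1}{n}\right])$, represents $n \cdot \mu_R$, since $\mathrm{lk}(R,\eta) = n$; consequently multiplication by $n$ on $H_1$ is exactly the map that needs to be inverted, and becomes an isomorphism over $\mathbb{Z}\left[\frac{1}{n}\right]$. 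Feeding this into the Mayer--Vietoris sequence yields the vanishing of the relative homology with $\mathbb{Z}\left[\frac{1}{n}\right]$ coefficients on both ends, which completes the proof.
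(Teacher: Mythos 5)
The paper cites this theorem from \cite{CFHeHo13} rather than proving it, and only sketches the construction (inside the proof of the theorem that follows), but your construction is diffeomorphic to that one --- gluing $E(J)\times I$ onto the slice-disk exterior $E_\Delta$ along a collar of $T\subset M_R=\partial E_\Delta$ is the same, after absorbing a collar, as gluing $M_J\times I$ to $E_\Delta$ along the solid torus $\nu(\eta)$ --- and your Mayer--Vietoris verification, with the winding number $n$ controlling the map on $H_1$, is exactly the standard argument. One slip you should fix: $\mu_\eta$ (the meridian of $\eta$) bounds a disk in $\nu(\eta)\subset M_R=\partial E_\Delta$ and is therefore nullhomologous in $E_\Delta$; the curve that represents $n\mu_R$ in $H_1(E_\Delta;\mathbb{Z}[\frac{1}{n}])$ is $\eta$ itself (equivalently its parallel pushoff $\lambda_\eta$, which the satellite gluing identifies with $\mu_J$), since $\lk(\eta,R)=n$ --- it is this relation $\mu_J = n\mu_R$ that forces the inversion of $n$.
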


The proof of the following result is an extension of the proofs of the results above to our context. 
\begin{thm} \label{cobordism proof}Suppose the knot $K$ is $\mathbb{Z}\left[\frac{1}{2}\right]$-slice and bounds a punctured Klein bottle $F$ with zero framing. Let $J$ be the knot type of the orientation-preserving band in any disk-band form for $F$. Then $J$ is smoothly $\mathbb{Z}\left[\frac{1}{2}\right]$-slice, i.e.\ $J$ bounds an embedded 2-disk in a 4-manifold $\mathcal{B}$ which is a $\mathbb{Z}\left[\frac{1}{2}\right]$-homology $\mathbb{B}^4$. 

In addition, if $K$ is smoothly slice, $\pi_1(\mathcal{B})$ is normally generated by a single element (the meridian of $K$), the meridian of $J$ is mapped to twice the generator of $H_1$ of the slice disk complement in $\mathcal{B}$, and the homology groups of $\mathcal{B}$ are as follows:
\begin{itemize}
\item $H_1(\mathcal{B};\mathbb{Z})=H_2(\mathcal{B};\mathbb{Z})=\mathbb{Z}/2$
\item $H_3(\mathcal{B};\mathbb{Z})=H_4(\mathcal{B};\mathbb{Z})=0$
\end{itemize}
\end{thm}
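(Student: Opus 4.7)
The plan is to combine Proposition \ref{actuallyinfection} with the two zero-surgery cobordism results, Proposition \ref{homcobtoslice} and Theorem \ref{cobordismthm}. By Proposition \ref{actuallyinfection}, $K$ is smoothly concordant to the infection $R(\eta, J)$, where $R$ is ribbon and $\lk(R, \eta) = 2$ (so the winding number of $\eta$ equals $2$). Since $R$ is ribbon it is in particular $\mathbb{Z}\left[\frac{1}{2}\right]$-slice, so Theorem \ref{cobordismthm} with $n = 2$ produces a smooth $\mathbb{Z}\left[\frac{1}{2}\right]$-homology cobordism $W_1$ from $M_J$ to $M_{R(\eta,J)}$. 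The concordance between $R(\eta, J)$ and $K$ gives, by the standard construction of removing a tubular neighborhood of the concordance annulus in $\mathbb{S}^3 \times I$ and then gluing back $\mathbb{S}^1 \times \mathbb{D}^2 \times I$ with the zero framing, a $\mathbb{Z}$-homology cobordism $W_2$ from $M_{R(\eta,J)}$ to $M_K$. Finally, since $K$ is $\mathbb{Z}\left[\frac{1}{2}\right]$-slice, Proposition \ref{homcobtoslice} supplies a $\mathbb{Z}\left[\frac{1}{2}\right]$-homology cobordism $W_3$ from $M_K$ to $M_U$.

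Stacking $W_1 \cup W_2 \cup W_3$ yields a $\mathbb{Z}\left[\frac{1}{2}\right]$-homology cobordism from $M_J$ to $M_U$. Applying the reverse direction of Proposition \ref{homcobtoslice} then produces a smoothly embedded disk for $J$ in a $\mathbb{Z}\left[\frac{1}{2}\right]$-homology $\mathbb{B}^4$, which will serve as the desired $\mathcal{B}$ and establishes the first half of the theorem.

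For the additional assertions when $K$ is honestly smoothly slice, I would take $W_3$ to be built from the exterior of a genuine slice disk for $K$ in $\mathbb{B}^4$, so that $\pi_1(W_3)$ is normally generated by the meridian $\mu_K$. The cobordism $W_2$ coming from a concordance likewise has $\pi_1$ normally generated by $\mu_K$, and the cobordism $W_1$ (built using a ribbon disk for $R$ as in the proof of Theorem \ref{cobordismthm}) has $\pi_1$ normally generated by $\mu_R$, which is identified with $\mu_K$ via the concordance. Under the infection decomposition $\mathbb{S}^3 \setminus R(\eta, J) = (\mathbb{S}^3 \setminus R) \cup_{\partial N(\eta)} (\mathbb{S}^3 \setminus J)$ used in the construction of $W_1$, the meridian $\mu_J$ is isotopic to $\eta$, and $\lk(R, \eta) = 2$ forces $[\mu_J] = 2[\mu_K]$ in $H_1$ of the slice-disk complement in $\mathcal{B}$. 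This gives $H_1(\mathcal{B}) = \mathbb{Z}/2$ generated by $\mu_K$ and the claimed statement about $\pi_1(\mathcal{B})$. The values $H_2(\mathcal{B}) = \mathbb{Z}/2$ and $H_3(\mathcal{B}) = H_4(\mathcal{B}) = 0$ then follow from Poincar\'e--Lefschetz duality (with both $\mathbb{Z}$ and $\mathbb{Z}/2$ coefficients) together with the facts that $\mathcal{B}$ has the $\mathbb{Q}$-homology of a ball and $\partial \mathcal{B} = \mathbb{S}^3$.

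The main obstacle is the careful bookkeeping in the third paragraph: verifying that no extra torsion or relations beyond $2\mu_K = 0$ are introduced when gluing the three cobordisms together, and identifying the meridians $\mu_R$, $\mu_K$, and $\mu_{R(\eta, J)}$ consistently throughout. The key numerical input is $\lk(R, \eta) = 2$, which is what makes the coefficient ring $\mathbb{Z}\left[\frac{1}{2}\right]$ appear when applying Theorem \ref{cobordismthm} and what produces the precise $\mathbb{Z}/2$ torsion orders in the homology of $\mathcal{B}$.
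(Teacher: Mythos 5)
Your proposal is correct and follows essentially the same route as the paper: Proposition \ref{actuallyinfection} to replace $K$ by $R(\eta,J)$, then stacking the cobordisms of Theorem \ref{cobordismthm} (winding number $2$) and Proposition \ref{homcobtoslice} to build the $\mathbb{Z}\left[\frac{1}{2}\right]$-homology cobordism from $M_J$ to $M_U$, capping off and adding a $2$-handle to obtain $\mathcal{B}$, and finally tracing the meridians through the gluings to pin down $\pi_1(\mathcal{B})$ and the homology. The only small gap is that for the precise $\mathbb{Z}/2$ torsion orders in $H_2(\mathcal{B})$ and the vanishing of $H_3(\mathcal{B})$, what you should cite is $\mathbb{Z}\left[\frac{1}{2}\right]$-acyclicity (forcing all reduced homology to be $2$-torsion) rather than mere $\mathbb{Q}$-acyclicity, together with duality and universal coefficients, exactly as the paper does.
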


\begin{figure}[t]
  \centering
  \includegraphics{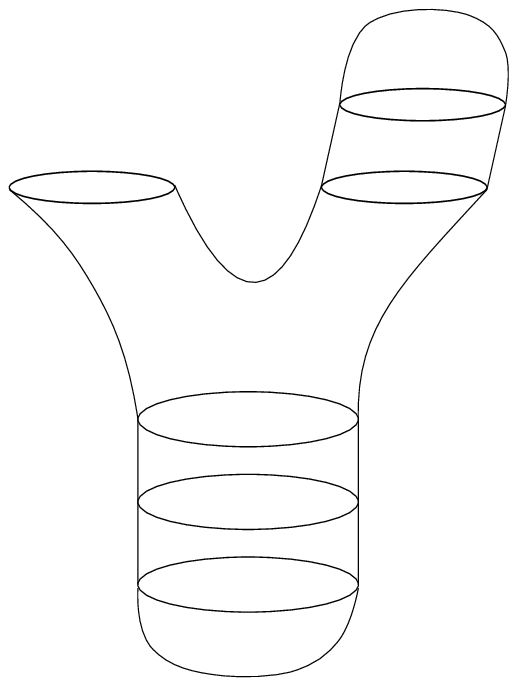}
  \put(-2.3,1.9){$M_J$}
  \put(0.02,2.25){$M_U$}
  \put(-0.05,1.9){$M_R$}
  \put(-0.55,1){$M_{R(\eta,J)}$}
  \put(-0.55,0.65){$M_K$}
  \put(-0.55,0.3){$M_U$}
  \caption[Proof of Theorem \ref{cobordism proof}]{Proof of Theorem \ref{cobordism proof}: Here $M_U\cong \mathbb{S}^1\times\mathbb{S}^2$'s have been capped off by $\mathbb{S}^1\times\mathbb{B}^3$'s.}\label{cobordism pic}
\end{figure}

\begin{proof}By Proposition \ref{actuallyinfection}, $K$ is smoothly concordant to $R(\eta,J)$, where $R$ is a ribbon knot and $J$ is as required. By the remarks at the beginning of this section, this gives us a $\integers$-homology cobordism between $M_K$ and $M_{R(\eta,J)}$. Since $R$ is smoothly slice and $\eta$ has winding number 2, Theorem \ref{cobordismthm} gives us a smooth $\mathbb{Z}\left[\frac{1}{2}\right]$-homology cobordism between $M_{R(\eta,J)}$ and $M_J$. Since $K$ is $\mathbb{Z}\left[\frac{1}{2}\right]$-slice, we have a $\integers\left[\frac{1}{2}\right]$-homology cobordism between $M_K$ and $M_U$, where $U$ is the unknot, by Proposition \ref{homcobtoslice}. By stacking the various cobordisms as in Figure \ref{cobordism pic}, we obtain that $M_J$ is smoothly $\integers\left[\frac{1}{2}\right]$-homology cobordant to $M_U$, and by Proposition \ref{homcobtoslice}, $J$ is smoothly $\integers\left[\frac{1}{2}\right]$-slice. This completes the proof of the first part of this theorem. 

To complete the proof, we need to take a closer look at the cobordism promised by Theorem \ref{cobordismthm}. Following the construction in \cite{CFHeHo13}, we have a cobordism between $M_R \sqcup M_J$ and $M_{R(\eta,J)}$, obtained as follows. Start with $M_J \times [0,1]$ and $M_R \times [0,1]$. Let $N(\eta)$ be a regular neighborhood of $\eta$ in $M_R$. We identify $N(\eta) \times \{1\}\subseteq M_R \times \{1\}$ with the surgery solid torus in $M_J \times \{1\}$ such that a parallel pushoff of $\eta$ is identified with the meridian of $K$. The resulting 4-manifold has boundary $M_J\sqcup M_R \sqcup -M_{R(\eta,J)}$. In addition, we have that $R$ is smoothly slice, and therefore, $M_R$ is homology cobordant to $M_U \cong \mathbb{S}^1\times\mathbb{S}^2$, which can be capped off by $\mathbb{S}^1\times\mathbb{B}^3$. This gives us the cobordism between $M_{R(\eta,J)}$ and $M_J$ claimed in Theorem \ref{cobordismthm} (the top half of Figure \ref{cobordism pic}). 

$R(\eta,J)$ is concordant to $M_K$, which gives us a homology cobordism between $M_{R(\eta,J)}$ and $M_K$.In addition, $K$ is slice, and therefore, we have a homology cobordism between $M_K$ and $M_U$. By gluing these cobordisms together we obtain a manifold with boundary $M_j\sqcup M_U$. Finally we cap off $M_U\cong \mathbb{S}^1\times\mathbb{S}^2$ by $\mathbb{S}^1\times \mathbb{B}^3$. This gives us a 4-manifold bounded by $M_J$, as shown in Figure \ref{cobordism pic}. We add a zero-framed 2-handle to $M_J$ along the meridian of $J$ to finally obtain the obtain the manifold $\mathcal{B}$ with $\partial\mathcal{B}=\mathbb{S}^3$, in which $J$ bounds a smoothly embedded disk, as desired.

The cobordism $W$ between $M_R$, $M_J$ and $M_{R(\eta,J)}$ deformation retracts to $M_{R(\eta,J)}\cup \eta\times \mathbb{B}^2$, so up to homotopy, we obtain the cobordism by adding a 2-cell and a 3-cell. The 2-cell is added along $\lambda_J$, the longitude of $J$ and $\pi_1(M_{R(\eta,J)})$ is normally generated by the meridian of $R(\eta,J)$. Therefore, $\pi_1(W)=\quot{\langle\langle\mu_{R(\eta,J)}\rangle\rangle}{\langle\langle\lambda_J\rangle\rangle}$, where $\langle\langle\cdot\rangle\rangle$ denotes normal closure in $\pi_1(W)$. 

The fundamental group of each of the other cobordisms is normally generated by the meridian of the relevant knot and the 2-handle added at the final stage kills off $\mu_J$, the meridian of the knot $J$. Putting this all together, we see that $$\pi_1(\mathcal{B})=\frac{\left(\quot{\langle\langle\mu_K\rangle\rangle}{\langle\langle\lambda_J\rangle\rangle}\right)}{\langle\langle\mu_J\rangle\rangle}$$
where $\langle\langle\cdot\rangle\rangle$ now denotes normal closure in $\pi_1(\mathcal{B})$. We note however, that $\lambda_J$ is contained in the normal closure of $\mu_J$ and therefore, $$\pi_1(\mathcal{B})=\quot{\langle\langle\mu_K\rangle\rangle}{\langle\langle\mu_J\rangle\rangle}$$

In particular, $\pi_1(\mathcal{B})$ is normally generated by $\mu_K$. Note that, in homology, $2\mu_K=\mu_J$ and hence, $H_1(\mathcal{B};\mathbb{Z})\cong\mathbb{Z}/2$. Since $\tilde{H_i}(\mathcal{B};\mathbb{Z}\left[\frac{1}{2}\right])=0$, $\tilde{H_i}(\mathcal{B};\mathbb{Z})$ is 2-torsion. We can recover all the other homology groups using the Universal Coefficient Theorem and Poincar\'{e}-Lefschetz Duality. All the homology groups below are with $\integers$ coefficients:

$$\mathbb{Z}/2 \cong H_1(\mathcal{B}) \cong H^3(\mathcal{B},\partial\mathcal{B})\cong \text{Hom}(H_3(\mathcal{B},\partial{\mathcal{B}}),\mathbb{Z})\oplus\text{Ext}(H_2(\mathcal{B},\partial{\mathcal{B}}),\mathbb{Z})$$
$$\Rightarrow \text{Ext}(H_2(\mathcal{B},\partial{\mathcal{B}}),\mathbb{Z}) \cong \mathbb{Z}/2$$
$$\Rightarrow \text{Torsion}(H_2(\mathcal{B},\partial{\mathcal{B}})) \cong \mathbb{Z}/2$$
Recall that $\partial{\mathcal{B}}=\mathbb{S}^3$. Therefore, using the homology exact sequence for a pair, we have:
$$0\rightarrow H_2(\mathcal{B})\xrightarrow{\cong} H_2(\mathcal{B},\partial{\mathcal{B}})\rightarrow 0 \rightarrow H_1(\mathcal{B})\xrightarrow{\cong} H_1(\mathcal{B},\partial{\mathcal{B}})\rightarrow 0$$
$$0\leftarrow H^2(\mathcal{B})\xleftarrow{\cong} H^2(\mathcal{B},\partial{\mathcal{B}})\leftarrow 0 \leftarrow H^1(\mathcal{B})\xleftarrow{\cong} H^1(\mathcal{B},\partial{\mathcal{B}})\leftarrow 0$$
As a result,
$$H_3(\mathcal{B})\cong H^1(\mathcal{B},\partial{\mathcal{B}})\cong H^1(\mathcal{B})\cong\text{Hom}(H_1(\mathcal{B}),\mathbb{Z})\cong 0$$
Since $H_2(\mathcal{B})$ is 2-torsion and $H_2(\mathcal{B})\cong H_2(\mathcal{B},\partial{\mathcal{B}})$ and $\text{Torsion}(H_2(\mathcal{B},\partial{\mathcal{B}})) \cong \mathbb{Z}/2$, $H_2(\mathcal{B})\cong\mathbb{Z}/2$.

We note that the slice disk $\Delta_J$ bounded by $J$ in the construction above is the co-core of the 2-handle added at the last stage and therefore, $\mu_J$ is mapped to twice the generator of $H_1(\mathcal{B}-\Delta_J)\cong \mathbb{Z}=\langle \mu_K\rangle$.\end{proof}

We should note that the condition of the meridian of $J$ mapping to twice the generator of the slice disk complement is related to the notion of being \textit{weakly} rationally slice \cite{Ka09}. In addition, we know that if $J$ is $\integers\left[\frac{1}{2}\right]$-slice, so is $K$, and therefore, we have actually proved that $K$ is $\integers\left[\frac{1}{2}\right]$-slice if and only if $J$ is $\integers\left[\frac{1}{2}\right]$-slice. Moreover, in conjunction with the remark following the proof of Proposition \ref{cables}, we have now proved: 
\begin{cor}For a knot $K$ if the (2,1)--cable is slice, or even just $\integers\left[\frac{1}{2}\right]$-slice, then K is $\mathbb{Z}\left[\frac{1}{2}\right]$-slice.\end{cor}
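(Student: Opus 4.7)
The plan is to combine the two main ingredients developed earlier in the paper: the construction (Proposition \ref{cables}) of a zero-framed punctured Klein bottle bounded by $K_{(2,p)}$, and the theorem (Theorem \ref{cobordism proof}) relating the $\mathbb{Z}\left[\frac{1}{2}\right]$-slice status of such a bounding knot to that of the knot type of the orientation-preserving band. Since a slice knot is in particular $\mathbb{Z}\left[\frac{1}{2}\right]$-slice, it suffices to prove the statement under the weaker hypothesis that $K_{(2,1)}$ is $\mathbb{Z}\left[\frac{1}{2}\right]$-slice.

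The first step is to specialize Proposition \ref{cables} to $p=1$ and $J=U$ (the unknot). Because the $(2,\pm 1)$-torus knot is unknotted, $U_{(2,-1)}$ is isotopic to $U$, so $K_{(2,1)} \# U_{(2,-1)} = K_{(2,1)}$. The proposition then produces a punctured Klein bottle $F$ bounded by $K_{(2,1)}$ with $\mathcal{F}(F)=0$ and with the knot type of the orientation-preserving band equal to $K \# U = K$. This is precisely the special case already recorded in the remark immediately following Proposition \ref{cables}.

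The second step is to apply Theorem \ref{cobordism proof} directly to the knot $K_{(2,1)}$ with the surface $F$ just constructed. Since $K_{(2,1)}$ is assumed $\mathbb{Z}\left[\frac{1}{2}\right]$-slice and $F$ is a zero-framed punctured Klein bottle bounded by it, the theorem identifies the knot type of the orientation-preserving band of $F$, namely $K$, as a $\mathbb{Z}\left[\frac{1}{2}\right]$-slice knot. This completes the proof. There is no substantive obstacle here, as the entire difficulty has been absorbed into Theorem \ref{cobordism proof}; the corollary is simply a matter of verifying that the hypothesis of that theorem is satisfied by $K_{(2,1)}$ via the specialization $J=U$ of Proposition \ref{cables}.
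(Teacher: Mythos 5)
Your proof is correct and follows exactly the route the paper takes: specialize Proposition \ref{cables} (equivalently, invoke the remark immediately after it) to exhibit $K_{(2,1)}$ as bounding a zero-framed punctured Klein bottle whose orientation-preserving band has knot type $K$, then apply Theorem \ref{cobordism proof} to conclude that $K$ is $\mathbb{Z}\left[\frac{1}{2}\right]$-slice. No gaps.
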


We are also now able to prove the following:
\begin{cor}\label{cableconv}Given knots $K$ and $J$, if $K_{(2,p)}$ is $\integers\left[\frac{1}{2}\right]$-concordant to $J_{(2,p)}$, then $K$ is $\mathbb{Z}\left[\frac{1}{2}\right]$-concordant to $J$. In particular, if $K_{(2,p)}$ is concordant to the $(2,p)$--torus knot, then $K$ is $\mathbb{Z}\left[\frac{1}{2}\right]$-slice.\end{cor}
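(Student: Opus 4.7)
The plan is to combine Proposition \ref{cables} with Theorem \ref{cobordism proof} via a standard concordance-inverse identity for $(2,p)$-cables. Assuming $K_{(2,p)}$ is $\mathbb{Z}\left[\frac{1}{2}\right]$-concordant to $J_{(2,p)}$, by definition the connected sum $K_{(2,p)} \# \left(-J_{(2,p)}\right)$ is $\mathbb{Z}\left[\frac{1}{2}\right]$-slice, where $-L$ denotes the orientation-reversed mirror (the concordance inverse) of $L$. First I would replace this inverse cable using the identity
\[
-J_{(2,p)} \;=\; (-J)_{(2,-p)}
\]
in the concordance group: mirroring a satellite mirrors both pattern and companion, so $\overline{J_{(2,p)}} = \bar{J}_{(2,-p)}$, and the remaining orientation reversal passes through the cabling since the $(2,p)$-torus pattern is reversible as an oriented knot in the solid torus.

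With this substitution the hypothesis reads: $K_{(2,p)} \# (-J)_{(2,-p)}$ is $\mathbb{Z}\left[\frac{1}{2}\right]$-slice. Proposition \ref{cables}, applied with the infecting knot $-J$ in place of $J$, tells us that this connected sum bounds a punctured Klein bottle $F$ with zero framing whose orientation-preserving band, in a suitable disk-band form, has core of knot type $K \# (-J)$. I would then invoke Theorem \ref{cobordism proof} directly: because $K_{(2,p)} \# (-J)_{(2,-p)}$ is $\mathbb{Z}\left[\frac{1}{2}\right]$-slice and bounds $F$ with $\mathcal{F}(F)=0$, the core $K \# (-J)$ of its orientation-preserving band is itself $\mathbb{Z}\left[\frac{1}{2}\right]$-slice. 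This is precisely the statement that $K$ is $\mathbb{Z}\left[\frac{1}{2}\right]$-concordant to $J$.

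For the ``in particular'' clause, specialize to $J = U$, the unknot. The $(2,p)$-torus knot equals $U_{(2,p)}$, so the hypothesis that $K_{(2,p)}$ is concordant to the $(2,p)$-torus knot yields in particular that $K_{(2,p)}$ is $\mathbb{Z}\left[\frac{1}{2}\right]$-concordant to $U_{(2,p)}$. Applying the first part then gives that $K$ is $\mathbb{Z}\left[\frac{1}{2}\right]$-concordant to $U$, i.e.\ $K$ is $\mathbb{Z}\left[\frac{1}{2}\right]$-slice.

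The only point requiring care, rather than a real obstacle, is the cable concordance-inverse identity in the first step; once this is recorded, the corollary is essentially a one-line combination of Proposition \ref{cables} and Theorem \ref{cobordism proof}. The arithmetic of coefficients is also consistent: the ring $\mathbb{Z}\left[\frac{1}{2}\right]$ appearing in Theorem \ref{cobordism proof} is forced by the winding-number-two infection curve $\eta$ produced in Proposition \ref{actuallyinfection}, which is exactly the winding number that arises from the $(2,p)$-cabling operation under consideration here.
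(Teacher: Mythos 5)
Your proposal is correct and follows essentially the same route as the paper: the key identity $-\bigl(J_{(2,p)}\bigr)=(-J)_{(2,-p)}$, followed by Proposition \ref{cables} to produce a zero-framed punctured Klein bottle bounded by $K_{(2,p)}\#(-J)_{(2,-p)}$ whose orientation-preserving band has core $K\#(-J)$, and then Theorem \ref{cobordism proof} to conclude. The paper's proof is terser, but the logical steps coincide.
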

\begin{proof}First we note that $-\left(J_{(2,p)}\right)=(-J)_{(2,-p)}$. We know from Proposition \ref{cables} that $K_{(2,p)} \# -\left(J_{(2,p)}\right) = K_{(2,p)} \# (-J)_{(2,-p)}$ bounds a punctured Klein bottle with zero framing, where we may consider $K\#-J$ to be the knot type of the orientation-preserving band. Since $K_{(2,p)}$ is $\integers\left[\frac{1}{2}\right]$-concordant to $J_{(2,p)}$, $K_{(2,p)} \# -\left(J_{(2,p)}\right)$ is $\integers\left[\frac{1}{2}\right]$-slice, and we are in the situation of Theorem \ref{cobordism proof}. Therefore, $K\#-J$ is $\integers\left[\frac{1}{2}\right]$-slice, and so $K$ is $\integers\left[\frac{1}{2}\right]$-concordant to $J$.\end{proof}

\bibliographystyle{abbrv}
\bibliography{knotbib}
\end{document}